\documentclass[12pt]{amsart}
\usepackage[lite]{amsrefs} 
\usepackage{amssymb,amsmath,enumitem}
\usepackage{graphicx}
\newcommand{\R}{\mathbb{R}}

\newcommand{\N}{\mathbb{N}}

\newcommand{\M}{\mathcal{M}}

\newcommand{\A}{\mathcal{A}}

\newcommand{\W}{{\bf{W}}}
\newcommand{\I}{{\bf{I}}}
\newcommand{\g}{{\bf{G}}}
\numberwithin{equation}{section}
\usepackage{hyperref}
\hypersetup{colorlinks=true, citecolor=blue, linkcolor=red, urlcolor=blue}
\theoremstyle{plain}
\newtheorem{Thm}{Theorem}[section]
\newtheorem{Cor}[Thm]{Corollary}
\newtheorem{Lem}[Thm]{Lemma}
\newtheorem{Prop}[Thm]{Proposition}

\theoremstyle{definition}

\newtheorem{Rem}[Thm]{Remark}

\theoremstyle{remark}

\begin{document}
\title[Solutions to nonlinear elliptic equations]
{Solutions in Lebesgue spaces to nonlinear elliptic equations 
with sub-natural growth terms}

\author{Adisak Seesanea}
\address{Department of Mathematics, Hokkaido University,  Sapporo, \newline Hokkaido 060-0810, Japan}
\email{\href{mailto:seesanea@math.sci.hokudai.ac.jp}{seesanea@math.sci.hokudai.ac.jp}}

\author{Igor E. Verbitsky}
\address{Department of Mathematics, University of Missouri,  Columbia,   \newline 
Missouri 65211, USA}
\email{\href{mailto:verbitskyi@missouri.edu}{verbitskyi@missouri.edu}}
\begin{abstract}
We study the existence problem for positive solutions $u \in L^{r}(\R^{n})$,
$0<r<\infty$, to the quasilinear elliptic equation
\[ 
-\Delta_{p} u  = \sigma u^{q}  \quad \text{in} \;\; \R^n
\] 
in the sub-natural growth case $0<q< p-1$, 
where $\Delta_{p}u = \text{div}( |\nabla u|^{p-2} \nabla u )$ is the $p$-Laplacian with 
$1<p<\infty$, and $\sigma$ is a nonnegative measurable function (or measure) on $\R^n$.

Our techniques rely on a study of general integral equations 
involving nonlinear potentials and related weighted norm inequalities. They are  applicable to more general quasilinear 
elliptic operators such as the  
$\A$-Laplacian $\text{div} \A(x,\nabla u)$, and the fractional Laplacian 
$(-\Delta)^{\alpha}$ on $\R^n$, as well as  linear
uniformly elliptic operators with bounded measurable coefficients $\text{div}(\A \nabla u)$ on an 
arbitrary domain $\Omega \subseteq \R^n$ with a positive Green function.
\end{abstract}
\subjclass[2010]{Primary 35J92; Secondary 35J20, 42B37.} 
\keywords{Quasilinear elliptic equation, measure data, $p$-Laplacian, fractional Laplacian, 
Wolff potential, Green function}
\thanks{A. S. is partially supported by JSPS Grant Number 17H01092}
\maketitle
\section{Introduction}\label{sect:intro} 
Let us consider the model quasilinear elliptic problem 
\begin{equation}\label{eq:p-laplacian}
\begin{cases}
-\Delta_{p} u  = \sigma u^{q}, \quad u\ge 0  \quad \text{in} \;\; \R^n, \\
\liminf\limits_{x\rightarrow \infty}u(x) = 0,
\end{cases}
\end{equation}
in the sub-natural growth case $0<q< p-1$.

Here $\Delta_{p}u = \text{div}( |\nabla u|^{p-2} \nabla u )$ is the $p$-Laplacian with 
$1<p<\infty$, and $\sigma$ is a nonnegative locally integrable function, or 
more generally, nonnegative Radon measure on $\R^{n}$ (in brief, $ \sigma \in \M^{+}(\R^n)$). 

In this paper, we use  the notion of $p$-superharmonic solutions, or equivalently, 
locally renormalized solutions. We refer to  \cite{HKM, KKT} for the corresponding definitions. Throughout, we tacitly assume that $u\in L^{q}_{loc}(\Omega, d\sigma)$, so that the right-hand side of \eqref{eq:p-laplacian} is a Radon measure. 

We establish a sharp sufficient condition on $\sigma$, in terms of integrability of  
nonlinear potentials, for the existence of a positive  solution $u \in L^{r}(\R^n)$, $0<r<\infty$, 
to problem \eqref{eq:p-laplacian}. 

The Wolff potential (or, more precisely, Havin-Maz'ya-Wolff potential, see \cite{HM, HW, KM}) 
$\W_{\alpha, p}\sigma$ of a measure 
$\sigma \in \mathcal{M}^{+}(\R^n)$ is defined, for $1<p<\infty$ and 
$0<\alpha<\frac{n}{p}$, by 
\[
\W_{\alpha, p}\sigma(x) = \int_{0}^{\infty} \left[ \frac{\sigma(B(x,r))}{r^{n-\alpha p}}\right]^{\frac{1}{p-1}}\; \frac{dr}{r}, \quad x \in \R^n
\]
where $B(x,r) = \{ y \in \R^n : |x-y|< r \}$ is a ball centered at $x \in \R^n$ of radius $r >0$.
Notice that $\W_{\alpha, p}\sigma=+\infty$ for $\alpha \ge \frac{n}{p}$ unless 
$\sigma = 0$.

In the linear case $p=2$, it reduces (up to a normalization constant) to the Riesz potential 
of order $2\alpha$, $\W_{\alpha, 2}\sigma = \I_{2\alpha}\sigma$, and in particular, 
$\W_{1, 2}\sigma =\I_{2}\sigma$ is the Newtonian potential. 
The reader may wish to consult, for example, \cite{AH, HKM, KM, MZ, Maz} for a comprehensive discussion
of nonlinear  potentials and their applications to partial differential equations.

We observe that bilateral pointwise estimates of nontrivial (minimal) solutions $u$ to 
\eqref{eq:p-laplacian} in the sub-natural growth case were obtained in \cite{CV2}:
\begin{equation} \label{two-sided}
c^{-1} [(\W_{1, p} \sigma)^{\frac{p-1}{p-1-q}}+\mathbf{K}_{1, p, q}  \sigma] 
\le u \le c [(\W_{1, p} \sigma)^{\frac{p-1}{p-1-q}} +\mathbf{K}_{1, p, q}  \sigma],  
\end{equation}
where $c>0$ is a constant which  depends only on $p$, $q$, and $n$. 

Here $\mathbf{K}_{1, p, q}$ is the so-called \textit{intrinsic} Wolff potential 
associated with \eqref{eq:p-laplacian} introduced in \cite{CV2}. Thus, a necessary 
and sufficient condition for the existence of a solution $u \in L^r(\R^n)$ 
to \eqref{eq:p-laplacian} is given by: 
\begin{equation} \label{cond-two-sided}
\W_{1, p}  \sigma \in L^{\frac{r(p-1)}{p-1-q}}(\R^n) \quad \textrm{and} \quad \mathbf{K}_{1, p, q}  \sigma \in L^r(\R^n).
\end{equation}

However, $\mathbf{K}_{1, p, q} \sigma$ is defined in terms of localized embedding 
constants in certain weighted norm inequalities, which  makes the second condition  in \eqref{cond-two-sided} quite complicated, and difficult to verify. 

In this paper, our goal is to give convenient sufficient conditions for the existence of 
solutions $u \in L^r(\R^n)$ in terms of potentials $\W_{1, p} \sigma$. These conditions yield simpler, but more restrictive sufficient conditions 
 of the type $\sigma \in L^s(\R^n)$, which are known to be sharp in the scale of Lebesgue spaces (see \cite{BO, CV3}).  

Our approach is also applicable to  similar existence results for the fractional Laplace problem
\begin{equation} \label{eq:frac-laplacian}
\begin{cases}
\left(-\Delta \right)^{\alpha} u = \sigma u^{q}, \quad u \ge 0  \quad \text{in} \;\; \mathbb{R}^n, \\
\liminf\limits_{x \rightarrow \infty}u(x) = 0,
\end{cases}
\end{equation}
where $0<q<1$ and $\left(-\Delta \right)^{\alpha}$ is the fractional
Laplacian with $0< \alpha < \frac{n}{2}$.

In the classical case $\alpha = 1$ (or equivalently, $p=2$)
our methods are adapted to establish a sufficient condition for the existence of a positive
superharmonic solution $u \in L^{r}(\Omega)$, 
$0<r<\infty$, to the problem
\begin{equation} \label{eq:laplacian}
\begin{cases}
-\Delta u = \sigma u^{q},  \quad u \ge 0 \quad \text{in} \;\; \Omega, \\
\liminf\limits_{x \rightarrow y} u(x) = 0, \quad y \in \partial\Omega,
\end{cases}
\end{equation}
where $0<q<1$ and $\Omega \subseteq \R^n$ is an arbitrary domain (possibly unbounded)
which possesses a positive Green function. 

We observe that the existence and uniqueness of {\it bounded} solutions to \eqref{eq:laplacian} on 
$\Omega = \R^n$ were established by Brezis and Kamin in \cite{BK}. 

In \cite{CV1} (see also \cite{SV1}), it was proved that the condition
\begin{equation}\label{cond:dsigma-wolff} 
\W_{1,p}\sigma \in L^{\frac{(\gamma+q)(p-1)}{p-1-q}}(\R^n, d\sigma)
\end{equation}
with $\gamma =1$, is necessary and sufficient for the existence of a positive {\it finite energy} solution $u \in \dot{W}_{0}^{1,p}(\R^n)$ to \eqref{eq:p-laplacian}. Moreover, such a solution 
is unique in $\dot{W}_{0}^{1,p}(\R^n)$.

Here, for $1 \leq p<\infty$ and a nonempty open set $\Omega \subseteq \R^n$, 
by $\dot{W}^{1,p}_{0}(\Omega)$ we denote the homogeneous Sobolev (or Dirichlet) 
space defined \cite{HKM} as the closure of $C_{0}^{\infty}(\Omega)$ with respect to the (semi) norm 
\[
\| u \|_{\dot{W}^{1,p}_{0}(\Omega)} =  \| \nabla u \|_{L^{p}(\Omega)}.
\]

In this  study, we will show that for a given $0<r<\infty$, 
condition \eqref{cond:dsigma-wolff}
with a suitable value of $\gamma=\gamma(r)>0$ yields the existence of a positive
$p$-superharmonic solution $u \in L^{r}(\R^n)$ to \eqref{eq:p-laplacian}.

Our main results are new even in the case $p=2$, or when $\sigma$ is a nonnegative locally integrable function on $\R^n$.

\begin{Thm}\label{thm:main-p-laplacian}  
Let $1<p<n$, $0<q<p-1$, $\frac{n(p-1)}{n-p}<r<\infty$, and $\sigma \in \mathcal{M}^{+}(\R^n)$ with $\sigma \not\equiv 0$. If \eqref{cond:dsigma-wolff} holds with 
$\gamma = \frac{r(n-p)-n(p-1)}{n}$, then there exists a positive $p$-superharmonic solution $u \in L^r(\R^{n})$ to \eqref{eq:p-laplacian}. 

If $n \leq p < \infty$, or $1<p<n$ 
and $0<r\le \frac{n(p-1)}{n-p}$, then 
there is only a trivial supersolution to \eqref{eq:p-laplacian}.
\end{Thm}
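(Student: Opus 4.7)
The plan is a PDE-Sobolev estimate combined with the pointwise two-sided bound \eqref{two-sided}. The hypothesis \eqref{cond:dsigma-wolff} forces $\W_{1,p}\sigma$ to be $\sigma$-a.e.\ (hence $dx$-a.e.) finite, far stronger than the existence criterion of \cite{CV2}; a minimal positive $p$-superharmonic solution $u$ to \eqref{eq:p-laplacian} thus exists and obeys \eqref{two-sided}. It remains to show $u \in L^{r}(\R^{n})$.

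The key observation is that the exponent $\gamma = (r(n-p) - n(p-1))/n$ in \eqref{cond:dsigma-wolff} is calibrated precisely so that the classical Sobolev embedding controls $\|u\|_{L^r}$. I would test the equation $-\Delta_p u = \sigma u^q$ against the truncated power $\min(u,R)^{\gamma}$, integrate by parts using the identity $u^{\gamma-1}|\nabla u|^{p} = C_{\gamma,p}\,|\nabla u^{(\gamma+p-1)/p}|^{p}$, apply $\dot{W}^{1,p}(\R^{n}) \hookrightarrow L^{np/(n-p)}(\R^{n})$ to $v := \min(u,R)^{(\gamma+p-1)/p}$, and let $R \to \infty$, obtaining
\[
\left(\int_{\R^{n}} u^{r}\, dx\right)^{(n-p)/n} \leq C \int_{\R^{n}} u^{\gamma+q}\, d\sigma,
\]
with $r = (\gamma+p-1)n/(n-p)$ matching the prescribed value.

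To bound the right-hand side, I would invoke the upper bound in \eqref{two-sided} and split
\[
\int u^{\gamma+q}\, d\sigma \leq C\int \bigl(\W_{1,p}\sigma\bigr)^{(\gamma+q)(p-1)/(p-1-q)}\, d\sigma + C\int \bigl(\mathbf{K}_{1,p,q}\sigma\bigr)^{\gamma+q}\, d\sigma.
\]
The first integral is precisely \eqref{cond:dsigma-wolff}, hence finite. The second integral is the principal technical obstacle: since $\mathbf{K}_{1,p,q}\sigma$ is defined only implicitly through localized weighted-norm testing constants in \cite{CV2}, controlling it in $L^{\gamma+q}(d\sigma)$ requires either dominating the local constants by the Wolff potential and aggregating via a Whitney/dyadic decomposition, or running a bootstrap that absorbs $\int \mathbf{K}_{1,p,q}\sigma^{\gamma+q}\, d\sigma$ into the left-hand side of the Sobolev inequality by means of the lower bound $u \geq c\,\mathbf{K}_{1,p,q}\sigma$ in \eqref{two-sided}.

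For the nonexistence statement: if $p \geq n$ and $\sigma \not\equiv 0$, then $\W_{1,p}\sigma \equiv +\infty$, which is incompatible with the lower bound in \eqref{two-sided} for any positive supersolution. If $1<p<n$ and $0<r\leq n(p-1)/(n-p)$, then the lower bound $u \geq c\,\mathbf{K}_{1,p,q}\sigma$, together with the infinity asymptotic $\mathbf{K}_{1,p,q}\sigma(x) \gtrsim |x|^{(p-n)/(p-1)}$ as $|x| \to \infty$ (valid for any nontrivial $\sigma$ because $\mathbf{K}_{1,p,q}\sigma \gtrsim \W_{1,p}\sigma$ on large scales), forces $u \notin L^{r}(\R^{n})$.
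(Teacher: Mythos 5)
Your approach is genuinely different from the paper's, but it has a gap at exactly the place you flag as "the principal technical obstacle," and that gap is not closed by either of your two proposed remedies.

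The paper never touches the intrinsic potential $\mathbf{K}_{1,p,q}\sigma$. Instead it (i) constructs a positive solution $v\in L^{\gamma+q}(\R^n,d\sigma)$ to the integral equation $v=\W_{1,p}(v^q\,d\sigma)$ by monotone iteration, using only the Wolff iterated inequality \eqref{iterated1-wolff} and a two-weight bound $\|\W_{1,p}(f\,d\sigma)\|_{L^{\gamma+q}(d\sigma)}\lesssim\|f\|^{1/(p-1)}_{L^{(\gamma+q)/q}(d\sigma)}$ (Theorem~\ref{thm:wolff-existence}, Lemma~\ref{lem:wolff}); (ii) proves a second two-weight inequality $\|\W_{1,p}(f\,d\sigma)\|_{L^r(dx)}\lesssim\|f\|^{1/(p-1)}_{L^{s}(d\sigma)}$ directly from the hypothesis (Lemmas~\ref{lem1-wolff}--\ref{lemma-link-p}); and (iii) applies the latter with $f=v^q$ to get $v\in L^r(dx)$, then builds a $p$-superharmonic solution $u\le Cv$ via the [CV2] iteration and weak continuity (Theorems~\ref{weak_cont_p-Laplacian}, \ref{pointwise_est_p-superharmonic}). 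The whole architecture is designed precisely to avoid estimating $\mathbf{K}_{1,p,q}\sigma$.

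Your route, by contrast, takes existence of the minimal solution for granted, tests against $\min(u,R)^\gamma$, and invokes Sobolev to reach
\[
\Big(\int_{\R^n}u^r\,dx\Big)^{(n-p)/n}\le C\int_{\R^n}u^{\gamma+q}\,d\sigma,
\]
and then splits the right side via \eqref{two-sided}. Two problems. First, the ``bootstrap'' you propose cannot absorb $\int(\mathbf{K}_{1,p,q}\sigma)^{\gamma+q}\,d\sigma$ into the left side: by the lower bound in \eqref{two-sided} this term is comparable to (a piece of) $\int u^{\gamma+q}\,d\sigma$ itself, which is the quantity on the right, not the $dx$-integral on the left — the two sides are integrals against different measures, so there is nothing to absorb; the argument is circular. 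Second, the alternative of ``dominating the local constants by the Wolff potential'' is false in general: the entire point of introducing $\mathbf{K}_{1,p,q}$ in [CV2] is that it is \emph{not} controlled by $\W_{1,p}\sigma$, and bridging them would require extra hypotheses on $\sigma$. There is also a smaller gap at the start: condition \eqref{cond:dsigma-wolff} does not obviously imply the [CV2] existence criterion (which requires a.e.-finiteness of \emph{both} $\W_{1,p}\sigma$ and $\mathbf{K}_{1,p,q}\sigma$), and ``$\sigma$-a.e. finite hence $dx$-a.e. finite'' is not a valid implication in general. Finally, for the nonexistence claim with $1<p<n$ and $r\le n(p-1)/(n-p)$, you again lean on asymptotics of $\mathbf{K}_{1,p,q}\sigma$; the paper's cleaner route is to use $u\ge C\,\W_{1,p}\omega$ with $d\omega=u^q\,d\sigma$ (Theorem~\ref{pointwise_est_p-superharmonic}), which immediately gives $u(x)\gtrsim|x|^{-(n-p)/(p-1)}$ for $|x|$ large. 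Your handling of the $p\ge n$ case via $\W_{1,p}\sigma\equiv+\infty$ is fine.

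The test-function-plus-Sobolev idea is not misguided in spirit — it is close to the ``finite generalized energy'' approach of [SV2] referenced in Section~\ref{sect:intro} for the Green-function case — but to make it work here you would need to first establish $\int u^{\gamma+q}\,d\sigma<\infty$ by an independent argument (essentially Theorem~\ref{thm:wolff-existence}), at which point the Wolff two-weight inequality of Lemma~\ref{lem1-wolff} already delivers $u\in L^r$ without passing through Sobolev at all.
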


A necessary  (but generally not sufficient) condition 
for the existence of a nontrivial solution $u \in L^{r}(\R^n)$ to \eqref{eq:p-laplacian}, 
where $1<p<n$ and $\frac{n(p-1)}{n-p}<r<\infty$,  follows from \eqref{cond-two-sided} (see also Theorem \ref{thm:lower-est-p} below),  
\begin{equation}\label{cond:dx-wolff-1} 
\W_{1,p} \sigma \in L^{\frac{r(p-1)}{p-1-q}}(\R^n). 
\end{equation}
Here we use $L^s(\R^n)$  with respect to Lebesgue measure, whereas in condition \eqref{cond:dsigma-wolff}, $L^s (\R^n, d \sigma)$ is with respect to the measure 
$\sigma$. 

A simple sufficient condition for \eqref{cond:dsigma-wolff} with $\gamma = 
\frac{r(n-p)-n(p-1)}{n}$
is given by
\begin{equation}\label{cond:sigma-suff}
\sigma \in L^{s_1}(\R^n),   \quad s_{1}=\tfrac{nr}{n(p-1-q)+ p r},  
\end{equation}
where  $1<p<n$ and $\frac{n(p-1)}{n-p}<r<\infty$ (see Proposition \ref{prop:wolff} with $\alpha=1$).
Thus, in light of Theorem \ref{thm:main-p-laplacian}, we have the following result.

\begin{Cor}\label{cor:main1}
Under the assumptions of Theorem \ref{thm:main-p-laplacian}, if 
 \eqref{cond:sigma-suff} holds, then there exists a positive $p$-superharmonic solution 
$u \in L^r(\R^n)$ to \eqref{eq:p-laplacian}, provided $1<p<n$ and $\frac{n(p-1)}{n-p}<r<\infty$.
\end{Cor}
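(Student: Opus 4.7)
The plan is to deduce Corollary \ref{cor:main1} from Theorem \ref{thm:main-p-laplacian} by verifying that hypothesis \eqref{cond:sigma-suff} forces the weighted $\sigma$-integrability \eqref{cond:dsigma-wolff} with the prescribed value $\gamma = (r(n-p)-n(p-1))/n$. Concretely, setting $\kappa := (\gamma+q)(p-1)/(p-1-q)$, the task reduces to proving
\[
\int_{\R^n} (\W_{1,p}\sigma)^{\kappa}\, d\sigma \,<\, \infty.
\]
Once this is in hand, the theorem immediately supplies the desired positive $p$-superharmonic solution $u \in L^{r}(\R^n)$.

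The natural route is to bring Lebesgue measure into play via H\"older's inequality and then exploit a scale-invariant Wolff potential mapping property. Writing $d\sigma = \sigma\, dx$ and applying H\"older with the conjugate pair $(s_1, s_1')$, where $s_1' = s_1/(s_1-1)$, yields
\[
\int_{\R^n} (\W_{1,p}\sigma)^{\kappa}\, \sigma\, dx \,\le\, \|\W_{1,p}\sigma\|_{L^{\kappa s_1'}(\R^n)}^{\kappa}\, \|\sigma\|_{L^{s_1}(\R^n)}.
\]
Next I would invoke the classical Havin--Maz'ya--Adams type mapping estimate for Wolff potentials (which is essentially the content of the forthcoming Proposition \ref{prop:wolff}, provable by a dyadic decomposition of $\W_{1,p}\sigma$ and the standard Riesz potential bounds on $L^s(\R^n)$):
\[
\|\W_{1,p}\sigma\|_{L^{t}(\R^n)} \,\le\, C\, \|\sigma\|_{L^{s_1}(\R^n)}^{\frac{1}{p-1}}, \qquad \frac{1}{t} \,=\, \frac{1}{s_1(p-1)} - \frac{p}{n(p-1)},
\]
valid provided $1 < s_1 < n/p$; both admissibility conditions hold under the standing assumptions $1<p<n$, $0<q<p-1$, and $r > n(p-1)/(n-p)$.

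The heart of the argument is then the bookkeeping identity $\kappa\, s_1' = t$, which aligns the Lebesgue exponent produced by H\"older with the one delivered by the Wolff estimate. Substituting $s_1 = nr/(n(p-1-q)+pr)$ and $\gamma = (r(n-p)-n(p-1))/n$, a direct computation gives $s_1' = nr/(r(n-p)-n(p-1-q))$ and $\kappa = (p-1)(r(n-p)-n(p-1-q))/(n(p-1-q))$, whose product simplifies cleanly to $r(p-1)/(p-1-q)$, exactly the value of $t$ above and the same exponent already singled out in the necessary condition \eqref{cond:dx-wolff-1}. This algebraic match is the only nontrivial step, and I expect it to be the main obstacle, though purely mechanical. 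Combining the H\"older bound with the Wolff estimate then yields \eqref{cond:dsigma-wolff} with the required $\gamma$, and Theorem \ref{thm:main-p-laplacian} delivers the existence of the positive $p$-superharmonic solution $u \in L^{r}(\R^n)$.
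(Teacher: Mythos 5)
Your argument is correct and follows essentially the same route as the paper: H\"older's inequality with the conjugate pair $(s_1,s_1')$ to split $\int (\W_{1,p}\sigma)^{\kappa}\,d\sigma$, followed by the Wolff-to-Riesz norm comparison \eqref{mw} and Sobolev's inequality for $\I_{p}\sigma$, which is exactly the content of Proposition \ref{prop:wolff} and its application with $\alpha=1$, $\beta=\kappa$. The exponent bookkeeping $\kappa s_1' = r(p-1)/(p-1-q)$ and the admissibility checks $1<s_1<n/p$ are carried out correctly.
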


\begin{Rem}\label{rem-main1}
 The sufficient conditions of Theorem \ref{thm:main-p-laplacian} and Corollary 
\ref{cor:main1}, along with the necessary condition \eqref{cond:dx-wolff-1}, hold for the $\A$-Laplacian in place of $\Delta_{p}$, under the standard structural  assumptions on $\mathcal{A}$ (see \cite{CV2, HKM, MZ}).
\end{Rem}

The main new feature in our approach is the use of 
two weight  inequalities involving Wolff potentials (see Lemma 
\ref{lem1-wolff}  and Lemma \ref{lemma-link-p} below).

Our methods in the case $p=2$ are applicable to analogous existence results for the fractional Laplace problem 
\eqref{eq:frac-laplacian}.

\begin{Thm}\label{thm:main2}
Let $0<q<1$, $0<\alpha<\frac{n}{2}$ and $\sigma \in \mathcal{M}^{+}(\R^n)$ with 
$\sigma \not\equiv 0$. Suppose that $\frac{n}{n-2\alpha}<r<\infty$ and
\begin{equation}\label{cond:dsigma-riesz} 
\I_{2\alpha}\sigma \in L^{\frac{\gamma+q}{1-q}}(\R^n, d\sigma), 
\end{equation} 
where $\gamma = \frac{r(n-2\alpha)-n}{n}$. Then there exists a positive  solution $u \in L^r(\R^{n})$ to \eqref{eq:frac-laplacian}.

If $0<r\le \frac{n}{n-2\alpha}$, then 
there is only a trivial supersolution to \eqref{eq:frac-laplacian}.
\end{Thm}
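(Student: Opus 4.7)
The strategy is to specialize the argument for Theorem~\ref{thm:main-p-laplacian} to the linear case $p=2$, where (up to normalization) the Wolff potential $\W_{\alpha,2}\sigma$ coincides with the Riesz potential $\I_{2\alpha}\sigma$, and the pointwise bilateral estimate \eqref{two-sided} becomes $u \asymp (\I_{2\alpha}\sigma)^{1/(1-q)} + \mathbf{K}_{\alpha, 2, q}\sigma$. First I would recast \eqref{eq:frac-laplacian} as the nonlinear integral equation $u = c_{n,\alpha}\,\I_{2\alpha}(u^{q}\,d\sigma)$, using the Riesz-potential representation of $(-\Delta)^{\alpha}$ for positive distributions vanishing at infinity. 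Then I would construct the minimal positive solution by monotone iteration, starting from $u_{0}=\kappa\,\I_{2\alpha}\sigma$ with $\kappa>0$ small enough to guarantee $u_{1}\ge u_{0}$, and defining $u_{k+1} = c_{n,\alpha}\,\I_{2\alpha}(u_{k}^{q}\,d\sigma)$; sub-natural growth $0<q<1$ makes the sequence monotone and bounded above by a multiple of $(\I_{2\alpha}\sigma)^{1/(1-q)}$, so $u_{k}\uparrow u$ pointwise with $u$ solving the integral equation.

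The crux of the existence part is the $L^{r}(\R^n)$ estimate. The value $\gamma = (r(n-2\alpha)-n)/n$ is the exact scaling exponent obtained by matching the homogeneity of $\I_{2\alpha}$ as a map $L^{s}(d\sigma)\to L^{r}(dx)$ via the Muckenhoupt--Wheeden two-weight criterion. Concretely, I would prove and apply the $p=2$ specialization of the two-weight inequality underlying Theorem~\ref{thm:main-p-laplacian}: under the hypothesis \eqref{cond:dsigma-riesz}, there is a constant $C$ depending on $\|\I_{2\alpha}\sigma\|_{L^{(\gamma+q)/(1-q)}(d\sigma)}$ such that
\[ \bigl\|\I_{2\alpha}(v\,d\sigma)\bigr\|_{L^{r}(\R^n)} \;\le\; C\,\|v\|_{L^{r/q}(\R^n)}^{q} \quad \text{for all } v\ge 0, \]
(or the analogous bilinear variant). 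Applied with $v = u_{k}$, this yields a self-improving inequality which, because $q<1$, closes to the uniform bound $\sup_{k}\|u_{k}\|_{L^{r}(\R^n)}<\infty$. Monotone convergence passes this bound to the limit $u$, giving $u\in L^{r}(\R^n)$.

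For non-existence when $0<r\le n/(n-2\alpha)$: any positive supersolution of \eqref{eq:frac-laplacian} satisfies $u \ge c\,\I_{2\alpha}(u^{q}\,d\sigma)$ and is strictly positive on every set of positive Lebesgue measure (fractional minimum principle). Picking a compact $K\subset\R^n$ with $\sigma(K)>0$ and $\inf_{K}u>0$, so that $\int_{K} u^{q}\,d\sigma>0$, one obtains for $|x|\to\infty$
\[ u(x) \;\gtrsim\; \int_{K}\frac{u(y)^{q}\,d\sigma(y)}{|x-y|^{n-2\alpha}}\;\asymp\;|x|^{-(n-2\alpha)}, \]
which is incompatible with $u\in L^{r}(\R^n)$ unless $r(n-2\alpha)>n$; hence only the trivial supersolution exists in the range $0<r\le n/(n-2\alpha)$.

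\textbf{Main obstacle.} The heart of the argument is the two-weight inequality for $\I_{2\alpha}$ with precisely the scaling exponent $\gamma = (r(n-2\alpha)-n)/n$ determined by the target space $L^{r}(\R^n)$, and the verification that the self-improvement step closes without loss of constants. I expect that most of the work lies in a careful application of the Riesz-potential version of Lemma~\ref{lem1-wolff}/Lemma~\ref{lemma-link-p} and the bookkeeping needed to iterate it; once this scaling computation is pinned down, the monotone-iteration construction and the elementary decay-based non-existence argument are routine.
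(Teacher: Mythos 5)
Your overall route — recast \eqref{eq:frac-laplacian} as the integral equation $u=\I_{2\alpha}(u^q\,d\sigma)$, construct a minimal solution by monotone iteration, bound it in $L^r$ via a two-weight inequality for $\I_{2\alpha}$, and rule out small $r$ by a decay estimate $u(x)\gtrsim|x|^{2\alpha-n}$ — is exactly the paper's strategy, which specializes the argument of Theorem~\ref{thm:main-p-laplacian} to $p=2$, $\W_{\alpha,2}=\I_{2\alpha}$. The decay-based non-existence part and the identification of $\gamma$ by scaling are fine.

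However, the iteration as you set it up does not close, and this is a genuine gap. You start from $u_0=\kappa\,\I_{2\alpha}\sigma$, but with that seed one cannot guarantee $u_1\ge u_0$ by choosing $\kappa$ small: the iterated inequality \eqref{iterated1-wolff} (with $p=2$, $t=1+q$) gives $u_1 \gtrsim \kappa^q(\I_{2\alpha}\sigma)^{1+q}$, so $u_1\ge u_0$ would require a uniform lower bound on $\I_{2\alpha}\sigma$, which fails on $\R^n$. The correct seed, as in the proof of Theorem~\ref{thm:wolff-existence}, is $u_0=c_0(\I_{2\alpha}\sigma)^{1/(1-q)}$; then the same iterated inequality with $t=1/(1-q)$ gives $u_1\ge c\,c_0^{\,q}(\I_{2\alpha}\sigma)^{1/(1-q)} \ge u_0$ once $c_0^{1-q}\le c$, independently of $x$. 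Second, the claim that the monotone sequence is ``bounded above by a multiple of $(\I_{2\alpha}\sigma)^{1/(1-q)}$'' is false in general: by \eqref{two-sided} the minimal solution is comparable to $(\I_{2\alpha}\sigma)^{1/(1-q)}+\mathbf{K}_{\alpha,2,q}\sigma$, and the intrinsic term can dominate. The paper instead obtains the needed a priori control by a \emph{norm} bound: Lemma~\ref{lem:wolff} shows each $u_j\in L^{\gamma+q}(\R^n,d\sigma)$ with $\|u_{j+1}\|_{L^{\gamma+q}(d\sigma)}\le c\,\|u_j\|_{L^{\gamma+q}(d\sigma)}^{q}$, hence a uniform bound, and Monotone Convergence yields $u\in L^{\gamma+q}(d\sigma)$. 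Finally, the two-weight inequality you invoke should read $\|\I_{2\alpha}(f\,d\sigma)\|_{L^r(\R^n)}\le C\|f\|_{L^s(\R^n,d\sigma)}$ with $s=\frac{\gamma+q}{q}=\frac{r(n-2\alpha)-n(1-q)}{nq}$ (Lemma~\ref{lem1-wolff} with $p=2$), applied to $f=u^q$; your exponent $r/q$ with Lebesgue measure on the right does not match and would give the wrong homogeneity. You also need Lemma~\ref{lemma-link-p} to pass from the $d\sigma$-hypothesis \eqref{cond:dsigma-riesz} to the Lebesgue-measure hypothesis of Lemma~\ref{lem1-wolff}; that step is absent from your sketch.
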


Here, a solution $u\ge 0$ to problem \eqref{eq:frac-laplacian} is understood in the sense that
$u \in L^{q}_{loc}(\R^n, \sigma)$ satisfies the corresponding 
integral equation
\begin{equation}\label{eq:int2}
u = \I_{2\alpha}(u^{q} d\sigma) \quad \text{in} \;\; \R^{n}.
\end{equation}

A necessary condition for the existence of such a solution $u \in L^{r}(\R^n)$,  to \eqref{eq:frac-laplacian}, which follows from Theorem \ref{lower_ptwise_est} below with $p=2$, is given by
\begin{equation}\label{cond:dx-riesz-1} 
\I_{2\alpha} \sigma \in L^{\frac{r}{1-q}}(\R^n).
\end{equation} 

As above, a simple sufficient condition for \eqref{cond:dsigma-riesz} with $\gamma = \frac{r(n-2\alpha)-n}{n}$ 
is provided by
\begin{equation}\label{cond:sigma-suff2}
\sigma \in L^{s_2}(\R^n), \quad  s_{2}=\tfrac{nr}{n(1-q)+2\alpha  r}, 
\end{equation}
where $\frac{n}{n-2\alpha}<r<\infty$.

\begin{Cor}\label{cor:main2}
Under the assumptions of Theorem \ref{thm:main2},  
if \eqref{cond:sigma-suff2} is valid, then there exists a positive solution 
$u \in L^r(\R^n)$ to \eqref{eq:frac-laplacian}.
\end{Cor}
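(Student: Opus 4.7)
The plan is to deduce Corollary~\ref{cor:main2} directly from Theorem~\ref{thm:main2} by showing that the hypothesis $\sigma\in L^{s_2}(\R^n)$ with $s_2=\frac{nr}{n(1-q)+2\alpha r}$ already implies the Riesz potential condition \eqref{cond:dsigma-riesz} with $\gamma=\frac{r(n-2\alpha)-n}{n}$. The strategy is a standard two-step estimate: first use H\"older's inequality to pass from an integral against $d\sigma$ to one against Lebesgue measure, then apply the Hardy--Littlewood--Sobolev theorem to control the resulting Lebesgue norm of $\I_{2\alpha}\sigma$.

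Concretely, write $d\sigma=f\,dx$ with $f\in L^{s_2}(\R^n)$ and apply H\"older with exponents $s_2$ and $s_2'$:
\[
\int_{\R^n}(\I_{2\alpha}\sigma)^{\frac{\gamma+q}{1-q}}\,d\sigma
\le \|f\|_{L^{s_2}}\left(\int_{\R^n}(\I_{2\alpha}\sigma)^{\frac{(\gamma+q)s_2'}{1-q}}\,dx\right)^{1/s_2'}.
\]
A direct bookkeeping computation from the definitions of $s_2$ and $\gamma$ gives $\tfrac{1}{s_2'}=\tfrac{r(n-2\alpha)-n(1-q)}{nr}$ and $\gamma+q=\tfrac{r(n-2\alpha)-n(1-q)}{n}$, so $(\gamma+q)s_2'=r$ and the target Lebesgue exponent equals $\tfrac{r}{1-q}$. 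It therefore suffices to show $\I_{2\alpha}\sigma\in L^{r/(1-q)}(\R^n)$.

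The definition of $s_2$ was designed precisely so that $\tfrac{1}{s_2}-\tfrac{2\alpha}{n}=\tfrac{1-q}{r}$, i.e., the Sobolev exponent $s_2^\ast$ of $s_2$ is exactly $r/(1-q)$. Once one checks that $1<s_2<n/(2\alpha)$, which is routine from the hypotheses $r>n/(n-2\alpha)$ and $0<q<1$, the Hardy--Littlewood--Sobolev inequality yields $\I_{2\alpha}\sigma\in L^{r/(1-q)}(\R^n)$ with norm bounded by a constant times $\|f\|_{L^{s_2}}$. Chaining the two bounds, condition \eqref{cond:dsigma-riesz} holds and Theorem~\ref{thm:main2} produces a positive solution $u\in L^r(\R^n)$ to \eqref{eq:frac-laplacian}.

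There is no genuine obstacle here: the calculation is essentially a matching of scaling exponents dictated by the choice of $s_2$, and the entire content of the corollary is that $s_2$ is the unique Lebesgue exponent for which the HLS chain closes on the exponent $r/(1-q)$ forced by Theorem~\ref{thm:main2}. The only mildly delicate point is confirming that $s_2$ lies in the open HLS range, which requires only the stated lower bound on $r$ and the assumption $2\alpha<n$.
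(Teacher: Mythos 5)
Your proposal is correct and is essentially the paper's own argument: the paper deduces Corollary~\ref{cor:main2} from Theorem~\ref{thm:main2} via the general Proposition~\ref{prop:wolff} (H\"older from $d\sigma$ to $dx$, then Sobolev/HLS), and when specialized to $p=2$ (so $\W_{\alpha,2}\sigma=\I_{2\alpha}\sigma$ and the Wolff--Riesz comparison step collapses), that proof becomes exactly your chain of estimates with $\beta s_2'=r/(1-q)$. You have simply unpackaged the general proposition rather than citing it; the exponent bookkeeping and the verification that $1<s_2<n/(2\alpha)$ are the same.
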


We observe that  condition \eqref{cond:dsigma-riesz}
with $\gamma=1$ is necessary and sufficient for the existence of a positive finite energy solution
$u \in \dot{H}^{\alpha}(\R^n)$, $0<\alpha <\frac{n}{2}$, to \eqref{eq:frac-laplacian};  uniqueness of such a solution was proved in the case $0<\alpha\leq1$, see \cite{SV1}.

Here, the homogeneous Sobolev space $\dot{H}^{\alpha}(\R^n)$ ($0<\alpha<\frac{n}{2}$) can be defined by means of Riesz potentials, 
\[
\dot{H}^{\alpha}(\R^n) =  \big\lbrace u\colon \,  u = \I_{\alpha}f,\; f \in L^{2}(\R^n) \big\rbrace,
\]
equipped with  norm  $\|u \|_{\dot{H}^{\alpha}(\R^n)}= \| f \|_{L^{2}(\R^n)}$.

Finally, we consider the sublinear elliptic problem \eqref{eq:laplacian} on an arbitrary domain 
$\Omega \subseteq \R^n$ (possibly unbounded) with positive Green function 
$G(x,y)$ on $\Omega \times \Omega$. 
The corresponding Green potential of a measure $\sigma \in \M^{+}(\Omega)$ is defined by 
\[
\g\sigma(x) = \int_{\Omega} G(x,y) \, d\sigma(y), \quad x \in \Omega.
\]
Our main results for this problem read as follows.

\begin{Thm}\label{thm:main3}  
Let $0<q<1$, $\sigma \in \mathcal{M}^{+}(\Omega)$ with 
$\sigma \not\equiv 0$, and let $G$ be the positive Green function 
associated with $-\Delta$ in $\Omega \subseteq \R^n$, $n \ge 3$.
Suppose $\frac{n}{n-2}<r<\infty$ and
\begin{equation}\label{cond:dsigma-green} 
\g\sigma \in L^{\frac{\gamma+q}{1-q}}(\Omega, d\sigma), 
\end{equation} 
where $\gamma = \frac{r(n-2)-n}{n}$. Then there exists a positive superharmonic solution 
$u \in L^r(\Omega)$ to \eqref{eq:laplacian}.
\end{Thm}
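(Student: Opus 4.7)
My plan is to convert \eqref{eq:laplacian} into the integral equation
\[
u = \g(u^q\, d\sigma) \quad \text{on } \Omega,
\]
solve it by monotone iteration, and control the resulting $L^r(\Omega, dx)$ norm using a Green-potential analogue of Lemma \ref{lemma-link-p}. The reformulation is standard potential theory (cf.\ \cite{BK, HKM}): a nonnegative superharmonic solution of \eqref{eq:laplacian} with the prescribed boundary behavior at $\partial\Omega$ is precisely a nonnegative solution of the integral equation above. Thus, it suffices to produce a positive $u \in L^r(\Omega)$ with $u = \g(u^q d\sigma)$.

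I would set up the iteration by choosing $u_0 = c \g\sigma$ for a sufficiently small constant $c > 0$ so that $u_0 \leq \g(u_0^q d\sigma)$, and then defining $u_{k+1} = \g(u_k^q d\sigma)$. Since $v \mapsto \g(v^q d\sigma)$ is order preserving for $v \geq 0$, the sequence $\{u_k\}$ is nondecreasing. Its pointwise monotone limit $u$ solves the integral equation automatically (by monotone convergence inside $\g$), and positivity follows from $\sigma \not\equiv 0$, giving $u_1 = \g(u_0^q d\sigma) > 0$.

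The analytic crux is the uniform bound $\sup_k \|u_k\|_{L^r(\Omega, dx)} < \infty$. For this, I would establish an inequality of the form
\[
\|\g(f^q d\sigma)\|_{L^r(\Omega, dx)} \leq C\, \|\g\sigma\|_{L^{(\gamma+q)/(1-q)}(\Omega, d\sigma)}^{\,\lambda}\, \|f\|_{L^r(\Omega, dx)}^{\,q}
\]
for all $f \geq 0$, with some $\lambda > 0$ and $\gamma = (r(n-2)-n)/n$ as in the statement. This is the Green-potential analogue of Lemma \ref{lemma-link-p}, and the exponent $(\gamma+q)/(1-q)$ is precisely the one for which the scaling closes: a single application of $\g$ smooths by order $2$, which combined with raising to the $q$-th power in the $L^{r/(1-q)}$ framework produces exactly this index. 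Applied iteratively with $f = u_k$, the recursion $\|u_{k+1}\|_{L^r} \leq A\, \|u_k\|_{L^r}^{\,q}$ with $0 < q < 1$ forces $\sup_k \|u_k\|_{L^r} \leq A^{1/(1-q)}$. Passing to the monotone limit yields $u \in L^r(\Omega)$, which inherits superharmonicity and the correct boundary behavior from the defining identity $u = \g(u^q d\sigma)$.

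The main obstacle is establishing the two-weight inequality of the third paragraph for a general domain $\Omega$. On $\R^n$ (Theorems \ref{thm:main-p-laplacian} and \ref{thm:main2}) the proof exploits translation invariance and dyadic decompositions tailored to Wolff and Riesz kernels. For a general $\Omega$ one loses these tools and must rely instead on the universal pointwise bound $G(x,y) \leq c_n |x-y|^{2-n}$ (reducing the $L^r(dx)$ side to a Riesz-potential estimate) together with a $3$-$G$ comparison inequality to pass the $d\sigma$ hypothesis through compositions of Green potentials. Making the critical scaling $\gamma = (r(n-2)-n)/n$ emerge cleanly from this procedure, uniformly in the geometry of $\Omega$, is the delicate step.
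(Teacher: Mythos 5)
Your plan correctly identifies the reduction to the integral equation $u=\g(u^q\,d\sigma)$ and correctly senses that a two-weight inequality linking the $d\sigma$-hypothesis to an $L^r(\Omega,dx)$ bound is the heart of the matter. But the specific inequality you propose in the third paragraph,
\[
\|\g(f^q\,d\sigma)\|_{L^r(\Omega,dx)} \leq C\,\|\g\sigma\|_{L^{(\gamma+q)/(1-q)}(\Omega,d\sigma)}^{\lambda}\,\|f\|_{L^r(\Omega,dx)}^q,
\]
is not what the duality and H\"older computations naturally produce, and there is a genuine gap in expecting the recursion to close entirely in $L^r(\Omega,dx)$. What the paper actually proves (Lemma \ref{lemma1}) is
\[
\|\g(f\,d\sigma)\|_{L^r(\Omega,dx)} \leq c\,\|\g\sigma\|_{L^{r/(1-q)}(\Omega,dx)}^{1/s'}\,\|f\|_{L^s(\Omega,d\sigma)}
\]
with $s=(\gamma+q)/q$; substituting $f=u^q$ gives a bound by $\|u\|_{L^{\gamma+q}(\Omega,d\sigma)}^q$, a norm against $d\sigma$, not $dx$. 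There is no embedding $L^r(dx)\hookrightarrow L^{\gamma+q}(d\sigma)$ to convert one into the other, so your iteration cannot be closed in $L^r(dx)$ with the ingredients available.

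The paper sidesteps this by not iterating in $L^r(dx)$ at all. The existence of a positive solution $u\in L^{\gamma+q}(\Omega,d\sigma)$ to $u=\g(u^q\,d\sigma)$ is obtained in one step from Theorem \ref{thm_Ver17}(ii), the characterization of sublinear $(s,r)$-type weighted inequalities for $\g$, which is exactly equivalent to hypothesis \eqref{cond:dsigma-green}. Then Lemma \ref{lemma2} upgrades \eqref{cond:dsigma-green} to $\g\sigma\in L^{r/(1-q)}(\Omega,dx)$ via the iterated WMP inequality \eqref{iterated1}, Lemma \ref{lemma1} converts this into the $L^s(d\sigma)\to L^r(dx)$ inequality above, and a single application of that inequality with $f=u^q$ places $u$ in $L^r(\Omega,dx)$. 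Two smaller points: the correct seed for the monotone iteration (used internally in Theorem \ref{thm:wolff-existence}, and in \cite{V}) is $u_0=c(\g\sigma)^{1/(1-q)}$, not $c\,\g\sigma$ --- with your choice the inequality $u_0\leq u_1$ does not follow from the iterated potential estimates; and the $3$-$G$ inequality you invoke is not needed here: the only kernel properties used are the weak maximum principle (for the iterated inequality \eqref{iterated1}) and the pointwise bound $G(x,y)\leq c|x-y|^{2-n}$ (for Sobolev's inequality), both of which hold for the Green function of $-\Delta$ on any domain.
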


Invoking Theorem \ref{Thm:lowerbound} below, in the special case where $G$ is a positive Green function associated with $-\Delta$, yields a necessary condition for the existence of such a solution,
\begin{equation}\label{cond:dx-green} 
\g \sigma \in L^{\frac{r}{1-q}}(\Omega).
\end{equation} 

A sufficient condition for \eqref{cond:dsigma-green} with $\gamma = \frac{r(n-2)-n}{n}$  is given by
\begin{equation}\label{cond:sigma-suff3}
\sigma \in L^{s_3}(\Omega), \quad  s_{3}=\tfrac{n r}{n(1-q)+2r}, 
\end{equation}
where $\frac{n}{n-2}<r<\infty$
(see Proposition \ref{prop3}, when $G$ is a positive Green function associated with 
$-\Delta$).
Thus, the following corollary is easily deduced from Theorem \ref{thm:main3}.

\begin{Cor}\label{cor:main3}
Under the assumptions of Theorem \ref{thm:main3},  
if condition \eqref{cond:sigma-suff3} holds, then there exists a positive superharmonic solution 
$u \in L^r(\Omega)$ to \eqref{eq:laplacian}.
\end{Cor}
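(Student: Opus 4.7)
The plan is to derive Corollary \ref{cor:main3} directly from Theorem \ref{thm:main3} by showing that the Lebesgue condition \eqref{cond:sigma-suff3} implies the potential-theoretic condition \eqref{cond:dsigma-green} with $\gamma = \frac{r(n-2)-n}{n}$. This is essentially the content of the referenced Proposition \ref{prop3}, which I would prove in three steps.

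First, reduce the Green potential to the Newtonian potential on the whole space. Since $n \ge 3$ and $\Omega \subseteq \R^n$, the maximum principle applied to $G(\cdot, y)$ versus the Newtonian kernel yields the pointwise bound $G(x,y) \le c_n |x-y|^{2-n}$ for $x,y \in \Omega$. Consequently, if $\tilde\sigma$ denotes the zero extension of $\sigma$ to $\R^n$, then $\g\sigma(x) \le c_n \, \I_2 \tilde\sigma(x)$ for a.e.\ $x \in \Omega$, and it suffices to control $\int_\Omega (\I_2 \tilde\sigma)^{(\gamma+q)/(1-q)} \, d\sigma$.

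Second, apply the Hardy-Littlewood-Sobolev inequality. For $s_3 = \frac{nr}{n(1-q)+2r}$ a direct computation gives $\frac{1}{s_3} - \frac{2}{n} = \frac{1-q}{r}$, and the hypothesis $r > \frac{n}{n-2}$ together with $0<q<1$ ensures $1 < s_3 < \frac{n}{2}$. Hence $\I_2$ maps $L^{s_3}(\R^n)$ boundedly into $L^{r/(1-q)}(\R^n)$, so that $\g\sigma \in L^{r/(1-q)}(\Omega)$ with a quantitative bound in terms of $\|\sigma\|_{L^{s_3}(\Omega)}$.

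Third, apply H\"older's inequality with the conjugate pair $(a, s_3)$, where $a = \frac{s_3}{s_3-1} = \frac{nr}{r(n-2)-n(1-q)}$. The arithmetic identity $a(\gamma+q) = r$ (which is the scaling that makes Theorem \ref{thm:main3} sharp on Lebesgue scales) shows that the exponent $a \cdot \frac{\gamma+q}{1-q}$ equals precisely $\frac{r}{1-q}$, so that
\[
\int_\Omega (\g\sigma)^{\frac{\gamma+q}{1-q}} \, d\sigma \;\le\; \|\g\sigma\|_{L^{r/(1-q)}(\Omega)}^{\frac{\gamma+q}{1-q}} \, \|\sigma\|_{L^{s_3}(\Omega)} \;<\; \infty.
\]
Combining this with Theorem \ref{thm:main3} produces the required positive superharmonic solution $u \in L^r(\Omega)$.

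I do not anticipate a serious obstacle: the proof is a one-shot application of HLS followed by H\"older, and the exponents are forced by the scaling relations $\frac{1}{s_3} = \frac{1-q}{r} + \frac{2}{n}$ and $a(\gamma+q)=r$. The only points to verify carefully are the pointwise Green function bound on an arbitrary domain (standard for $n\ge 3$) and the admissible range $1<s_3<\frac{n}{2}$, both of which follow immediately from the assumption $\frac{n}{n-2}<r<\infty$ and $0<q<1$.
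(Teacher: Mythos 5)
Your proposal is correct and takes essentially the same route as the paper: the paper reduces Corollary~\ref{cor:main3} to Proposition~\ref{prop3} (cited from \cite{SV2}) together with Theorem~\ref{thm:main3}, and your three steps --- the domination $G(x,y)\le c_n|x-y|^{2-n}$, the Sobolev/HLS mapping $\I_2:L^{s_3}\to L^{r/(1-q)}$ via $\frac{1}{s_3}-\frac{2}{n}=\frac{1-q}{r}$, and the H\"older step with the identity $s_3'(\gamma+q)=r$ --- are precisely the content of that proposition specialized to $\alpha=1$. The exponent computations all check out ($1<s_3<\frac{n}{2}$ follows from $r>\frac{n}{n-2}$ and $0<q<1$, and $\beta=\frac{\gamma+q}{1-q}$ gives $s=\frac{n(\beta+1)}{n+2\beta}=s_3$), so the only difference is that you prove the cited proposition inline rather than invoking it.
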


Corollary \ref{cor:main3}, when $\Omega \subseteq \R^n$ is a  bounded domain, is due to Boccardo and Orsina \cite{BO}, with a different proof.

It was recently shown by the authors \cite{SV2} that, for an arbitrary $0<\gamma<\infty$, condition
\eqref{cond:dsigma-green} is necessary and sufficient for the existence a positive
superharmonic solution $u$ to \eqref{eq:laplacian} with {\it finite generalized energy}:
\[
\mathcal{E}_{\gamma}[u] = \int_{\Omega} |\nabla u|^{2} u^{\gamma - 1}\,dx < +\infty.
\]
Theorem \ref{thm:main3}  can be deduced from this result, but we 
give an independent proof below. 

\begin{Rem}\label{rem3}
The above results hold, with the same proofs,  for  linear
uniformly elliptic operators with bounded measurable coefficients 
$L=-\text{div}(\A \nabla \cdot)$ in place of $-\Delta$, on an 
arbitrary domain $\Omega \subseteq \R^n$ with positive Green function, as well as 
more general (nonlocal) integral equations $u= \mathbf{G}(u^q d\sigma)$ 
with positive kernel $G$. 

The restrictions on $G$ imposed below are the {\it weak maximum principle} (WMP), and {\it quasi-symmetry} 
(see Sec. \ref{sect:kernel}). We also sometimes add  the condition that $G$ is bounded above by the Riesz kernel 
$I_{2\alpha}(x-y)= |x-y|^{2\alpha-n}$
of order $2\alpha$ for some $0<\alpha<\frac{n}{2}$,
\begin{equation}\label{cond:Riesz0} 
G(x, y) \leq c I_{2\alpha}(x-y), \quad  \forall x, y \in \Omega,
\end{equation} 
where $c$ is a positive constant independent of $x$ and $y$.
\end{Rem}

This paper is organized as follows. 
In Sec. 2, we recall the necessary background and state preliminary results 
concerning nonlinear potentials and related weighted norm inequalities.
In Sec. 3, we simultaneously prove the existence results for positive solutions  
$u \in L^{r}(\R^n)$
to both problems \eqref{eq:p-laplacian} and \eqref{eq:frac-laplacian}.
Our main results for problem \eqref{eq:laplacian} will be established in Sec. 4.

\section{Preliminaries}
\subsection{Wolff potentials}\label{sect:wolff}
Recall that for $1<p<\infty$ and $0<\alpha<\frac{n}{p}$,
the (homogeneous) potential $\W_{\alpha, p}\sigma$ of a measure $\sigma \in \mathcal{M}^{+}(\R^n)$  is defined by \cite{HM, HW}
\[
\W_{\alpha, p}\sigma(x) := \int_{0}^{\infty} \left[ \frac{\sigma \left( B(x, r) \right)}{r^{n-\alpha p}}\right]^{\frac{1}{p-1}}\; \frac{dr}{r}, \quad x \in \R^n,
\]
where $B(x,r):= \{ y \in \R^n : |x-y|< r \}$ is a ball centered at $x \in \R^n$ of radius 
$r >0$.

It is well-known that
\begin{equation}\label{inq:HM}
\W_{\alpha, p}\sigma(x) \leq c \mathbf{V}_{\alpha, p}\sigma(x), \quad x \in \R^n, 
\end{equation}
where $\mathbf{V}_{\alpha, p}\sigma := \I_{\alpha}[(\I_{\alpha}\sigma)^{\frac{1}{p-1}}dx]$ is 
the Havin-Maz'ya potential, and $c=c(\alpha, p, n) $ is a positive constant.

Havin-Maz'ya potentials satisfy the weak maximum (or boundedness) 
principle (see \cite[Theorem 2.6.3]{AH}). A similar weak maximum principle holds 
for Wolff potentials: if $\sigma \in \mathcal{M}^{+}(\R^n)$, then
\begin{equation}\label{wolff-max}
\W_{\alpha, p}\sigma(x) \leq c  \, \sup \left \{\W_{\alpha, p}\sigma(y): \, y \in \textrm{supp}(\sigma)\right\}, \quad \forall x \in \R^n,
\end{equation}
where  $c=c(\alpha, p, n)$ is a positive constant.

The following pointwise iterated inequality for Wolff potentials will be used below.

\begin{Thm}[\cite{CV1}]\label{Thm:iterated-wolff}
Let $1<p< \infty$, $0<\alpha< \frac{n}{p}$, and $\sigma \in \M^{+}(\R^{n})$ 
with $\sigma \not\equiv 0$. Then the following estimates hold: 
\begin{itemize}
	\item[(i)]If $t\geq1$, then   
	\begin{equation}\label{iterated1-wolff}
(\W_{\alpha, p}\sigma)^{t}(x) \leq c \, \W_{\alpha, p} \left( (\W_{\alpha, p}\sigma)^{(t-1)(p-1)} d\sigma \right)(x), \quad  \forall x \in \R^{n}. 
	\end{equation}
	\item[(ii)] If $0<t\leq1$, then
	\begin{equation}\label{iterated2-wolff}
(\W_{\alpha, p}\sigma)^{t}(x) \geq c \, \W_{\alpha, p} \left( (\W_{\alpha, p}\sigma)^{(t-1)(p-1)} d\sigma \right)(x), \quad  \forall x \in \R^{n}.
	\end{equation}
\end{itemize}
Here, $c= c(t, p, n, \alpha) > 0$.
\end{Thm}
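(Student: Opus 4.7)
The plan is to derive both inequalities from a single calculation combining an elementary calculus identity with a pointwise comparison of truncated Wolff potentials. Assume $\W_{\alpha,p}\sigma(x)<\infty$, since otherwise both statements are straightforward. Set $I(R):=[\sigma(B(x,R))/R^{n-\alpha p}]^{1/(p-1)}$ and define the tail truncation
\[
\tilde F(R):=\int_R^\infty I(r)\,\frac{dr}{r}, \qquad R>0,
\]
so that $\tilde F(0^+)=\W_{\alpha,p}\sigma(x)$ and $\tilde F(\infty)=0$. Differentiating $\tilde F^t$ and using $\tilde F(\infty)=0$ yields the identity
\begin{equation*}
(\W_{\alpha,p}\sigma(x))^t \;=\; t\int_0^\infty \tilde F(R)^{t-1}\, I(R)\,\frac{dR}{R}.
\end{equation*}

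The second ingredient is a geometric pointwise bound. For every $y\in B(x,R)$ and $r\ge 2R$ the triangle inequality gives $B(x,r/2)\subseteq B(y,r)$, hence $\sigma(B(y,r))\ge\sigma(B(x,r/2))$. Restricting the defining integral of $\W_{\alpha,p}\sigma(y)$ to $r\ge 2R$ and changing variables $s=r/2$ then produces a constant $c_0=c_0(n,\alpha,p)>0$ such that
\begin{equation*}
\W_{\alpha,p}\sigma(y)\;\ge\; c_0\,\tilde F(R) \qquad \text{for every } y\in B(x,R).
\end{equation*}

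To conclude, set $f:=\W_{\alpha,p}\sigma$ and $d\mu:=f^{(t-1)(p-1)}d\sigma$. If $t\ge 1$, the exponent $(t-1)(p-1)$ is nonnegative, so the pointwise comparison gives
\[
\mu(B(x,R))\;\ge\; c_0^{(t-1)(p-1)}\,\tilde F(R)^{(t-1)(p-1)}\,\sigma(B(x,R)).
\]
Substituting this into the definition of $\W_{\alpha,p}\mu(x)$ and pulling out the $\tilde F$-factor shows that $\W_{\alpha,p}\mu(x)$ is bounded below by a constant multiple of $\int_0^\infty \tilde F(R)^{t-1}I(R)\,dR/R$, which by the calculus identity above equals (up to constants) $(\W_{\alpha,p}\sigma(x))^t$; this proves (i). If instead $0<t\le 1$, the exponent $(t-1)(p-1)$ is nonpositive, the same pointwise comparison reverses the inequality on $\mu(B(x,R))$, and the remaining steps give the opposite estimate, proving (ii). The assumption $\sigma\not\equiv 0$ ensures $\tilde F(R)>0$ for all $R>0$, so $\tilde F(R)^{t-1}$ is well defined when $t<1$.

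The argument is essentially mechanical. The only real point to track is that the sign of the exponent $(t-1)(p-1)$ flips at $t=1$, which is precisely what converts (i) into (ii) at no additional cost. I anticipate the one substantive check is the geometric comparison on $B(x,R)$: one should verify that the constant $c_0$ arising from the change of variables $s=r/2$ depends only on $n$, $\alpha$, and $p$ (and not on $R$ or $\sigma$), so that the final constant is admissible.
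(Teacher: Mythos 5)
The paper cites this theorem from \cite{CV1} and does not reproduce its proof, but your argument is correct and is exactly the standard technique used there: the calculus identity $(\W_{\alpha,p}\sigma(x))^t = t\int_0^\infty \tilde F(R)^{t-1}I(R)\,dR/R$ for the tail truncation, combined with the pointwise lower bound $\W_{\alpha,p}\sigma(y)\ge c_0\tilde F(R)$ on $B(x,R)$ (with $c_0=2^{-(n-\alpha p)/(p-1)}$), applied with the sign of the exponent $(t-1)(p-1)$ determining the direction of the inequality. Both the identity and the comparison constant check out, so the proof is sound.
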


The following weak continuity result will be used to prove the existence of 
$p$-superharmonic solutions to quasilinear equations. We denote by 
$\omega[u]$ the $p$-measure associated with a $p$-superharmonic function $u$ (see \cite{HKM}). 

\begin{Thm} [\cite{TW}] \label{weak_cont_p-Laplacian}
Suppose $\lbrace u_{j} \rbrace_{j=1}^{\infty}$ is a sequence of nonnegative $p$-superharmonic functions in $\Omega$ such that $u_j \rightarrow u$ a.e. as $j \rightarrow \infty$, where $u$ is a $p$-superharmonic function in $\Omega$. Then $\omega[u_{j}]$ converges weakly to $\omega[u]$, that is,
\[
\lim_{j \rightarrow \infty} \int_{\Omega} \varphi \; d\omega[u_j] 
= \int_{\Omega} \varphi \; d\omega[u]
\]
for all $\varphi \in C_{0}^{\infty}(\Omega)$.
\end{Thm}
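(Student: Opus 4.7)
The plan is to localize, reduce to bounded functions via truncation, establish a.e.\ convergence of gradients, and then pass to the limit in the distributional formulation of the $p$-Laplacian. Without loss of generality we may fix a precompact open $\Omega' \Subset \Omega$ containing $\mathrm{supp}(\varphi)$ and work on $\Omega'$. Recall that for any $p$-superharmonic $v \ge 0$, the truncations $T_k v := \min(v, k)$ lie in $W^{1,p}_{\mathrm{loc}}(\Omega)$, and the Riesz measure $\omega[v]$ is constructed as the weak limit $\omega[v] = \lim_{k \to \infty} -\Delta_p(T_k v)$ in the sense of distributions, while $|\nabla v|^{p-1} \in L^{q}_{\mathrm{loc}}(\Omega)$ for every $q < \frac{n}{n-1}$ (the Kilpel\"ainen--Mal\'y / Boccardo--Gallou\"et-type estimate for $p$-superharmonic functions).

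The first key step is to obtain uniform local bounds. Using the Boccardo--Gallou\"et-type gradient estimate on each level set $\{u_j \le k\}$ and the fact that $\omega[u_j](\Omega')$ is locally finite, show that $\{u_j\}$ is uniformly bounded in $W^{1,q}(\Omega')$ for $q < \frac{n(p-1)}{n-1}$, and, for each fixed $k$, $\{T_k u_j\}$ is uniformly bounded in $W^{1,p}(\Omega')$. By Fatou and lower semicontinuity, the same bounds pass to $u$. Combined with the assumed a.e.\ convergence $u_j \to u$, this yields $T_k u_j \rightharpoonup T_k u$ weakly in $W^{1,p}(\Omega')$ for each $k$, and $T_k u_j \to T_k u$ strongly in $L^p(\Omega')$.

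The main obstacle, and the heart of the argument, is to promote this weak convergence to a.e.\ convergence of the gradients $\nabla u_j \to \nabla u$ on $\Omega'$. The standard route is a Minty--Browder / monotonicity trick adapted to truncations: for fixed $k$, test the equation satisfied by $T_k u_j$ against the admissible test function
\[
\psi_j^k := \eta \cdot \Phi\bigl(T_k u_j - T_k u\bigr),
\]
where $\eta \in C_c^\infty(\Omega')$ is a cut-off and $\Phi(s) = s\,e^{\lambda s^2}$ for a suitably large $\lambda$ (so that linear gradient terms are absorbed by the quadratic ones on the left). Using the strict monotonicity inequality
\[
\bigl(|\xi|^{p-2}\xi - |\eta|^{p-2}\eta\bigr)\cdot(\xi-\eta) \ge c\,\omega_p(\xi,\eta)\,|\xi-\eta|^2,
\]
one deduces that $\nabla T_k u_j \to \nabla T_k u$ in measure, hence a.e.\ up to a subsequence, on $\{u_j \le k\} \cap \{u \le k\}$. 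Since $k$ is arbitrary and $u_j \to u$ a.e., a diagonal extraction gives $\nabla u_j \to \nabla u$ a.e.\ on $\Omega'$. By Vitali and the uniform $L^q$ bound on $|\nabla u_j|^{p-1}$, the vector field $|\nabla u_j|^{p-2}\nabla u_j$ converges to $|\nabla u|^{p-2}\nabla u$ in $L^1_{\mathrm{loc}}(\Omega')$.

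Finally, to pass to the limit for the full measure and not just for the truncations, one must control the tail contribution. The point is that for any $\varepsilon>0$, one can choose $k$ so large that the mass of $\omega[u_j]$ on $\{u_j > k\} \cap \mathrm{supp}(\varphi)$ is uniformly $< \varepsilon$; this follows from the capacitary/energy estimate
\[
\omega[u_j]\bigl(\{u_j > k\} \cap \mathrm{supp}(\varphi)\bigr) \le \frac{C}{k^{p-1}} \int_{\Omega'} |\nabla T_k u_j|^{p}\,dx
\]
together with the uniform $W^{1,q}$ bound and standard truncation energy identities. Combining the a.e.\ gradient convergence (which yields $\int \varphi\, d(-\Delta_p T_k u_j) \to \int \varphi\, d(-\Delta_p T_k u)$) with this tail control and the same truncation description of $\omega[u]$, one concludes $\int\varphi\,d\omega[u_j] \to \int\varphi\,d\omega[u]$ for all $\varphi \in C_c^\infty(\Omega)$. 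The subsequence principle then upgrades this to convergence of the whole sequence.
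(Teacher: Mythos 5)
The paper does not prove this statement: it is quoted verbatim from Trudinger--Wang \cite{TW}, so there is no internal proof to compare against. Judged on its own merits, your outline follows the standard route (truncation, local energy bounds, Boccardo--Murat a.e.\ convergence of gradients, Vitali), which is essentially how the result is proved in the literature, but two steps are not actually secured as written.

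First, the uniform-in-$j$ bound. For fixed $k$ the Caccioppoli estimate for supersolutions does give $\int_K |\nabla T_k u_j|^p \le C(K)k^p$ uniformly in $j$, and hence uniformly bounded masses for $-\Delta_p(T_k u_j)$; that part is fine and suffices for the monotonicity trick. But your claim that $\{u_j\}$ is uniformly bounded in $W^{1,q}(\Omega')$ for $q<\tfrac{n(p-1)}{n-1}$ does not follow from ``$\omega[u_j](\Omega')$ is locally finite'' --- finiteness of each individual mass gives no uniformity in $j$. The missing ingredient is a local uniform integral bound on the functions $u_j$ themselves (equivalently, on $\omega[u_j](K)$): since $u_j\to u$ a.e.\ and the limit $u$ is $p$-superharmonic, hence finite a.e., Egorov gives a set of positive measure on which the $u_j$ are uniformly bounded, and the weak Harnack inequality then propagates this to uniform local weak-$L^s$ bounds on $u_j$ and uniform bounds on the Riesz masses. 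Without this step the Vitali/equi-integrability argument for $|\nabla u_j|^{p-1}$ has no uniform input.

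Second, the tail estimate in your last paragraph is quantitatively wrong: combining
\[
\omega[u_j]\bigl(\{u_j>k\}\cap \mathrm{supp}(\varphi)\bigr)\le \frac{C}{k^{p-1}}\int_{\Omega'}|\nabla T_k u_j|^p\,dx
\]
with the only available bound $\int_{\Omega'}|\nabla T_k u_j|^p\le Ck^p$ yields $\le Ck$, which diverges rather than tends to $0$. Fortunately this step is unnecessary: once you know $\nabla u_j\to\nabla u$ a.e.\ together with uniform local integrability of $|\nabla u_j|^{p-1}$, you get $|\nabla u_j|^{p-2}\nabla u_j\to|\nabla u|^{p-2}\nabla u$ in $L^1_{\mathrm{loc}}$, and since $\int_\Omega\varphi\,d\omega[v]=\int_\Omega|\nabla v|^{p-2}\nabla v\cdot\nabla\varphi\,dx$ for $p$-superharmonic $v$ (in the range where the generalized gradient is locally integrable), the weak convergence of the measures follows directly; no separate control of the mass on $\{u_j>k\}$ is needed. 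I would either delete that paragraph or replace it by the distributional representation of the Riesz measure.
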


We shall use the following lower bounds for supersolutions to \eqref{eq:p-laplacian}.

\begin{Thm}[\cite{CV1}] \label{thm:lower-est-p} 
Let $1<p<n$, $0<q<p-1$ and $\sigma \in \mathcal{M}^{+}(\mathbb{R}^n)$.
Suppose $u$ is a nontrivial supersolution to \eqref{eq:p-laplacian}. Then $u$ satisfies the inequality 
\[
u \geq c \left( \W_{1, p}\sigma \right)^{\frac{p-1}{p-1-q}}, 
\]
where $c = c(n, p, q) >0$. If $p \geq n$ there is only a trivial supersolution $u=0$ on $\R^n$.
\end{Thm}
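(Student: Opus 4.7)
The plan is to combine the Kilpeläinen--Malý (KM) pointwise lower bound for $p$-superharmonic solutions of $-\Delta_{p} u = \mu$ with the iterated Wolff inequality of Theorem \ref{Thm:iterated-wolff} via a self-improving bootstrap. Since $u \not\equiv 0$ is a $p$-superharmonic supersolution with $\liminf_{|x| \to \infty} u(x) = 0$, the associated $p$-measure $\omega[u]$ dominates $\sigma u^{q}$, and the KM estimate (the boundary term at infinity being suppressed by the decay condition) produces
\[
u(x) \geq c_{0}\, \W_{1,p}(u^{q}\, d\sigma)(x), \qquad x \in \R^n.
\]

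Next, set $s := (p-1)/(p-1-q) > 1$ and note the key algebraic identity $(s-1)(p-1) = sq$. Suppose provisionally that $u \geq A\,(\W_{1,p}\sigma)^{s}$ pointwise for some $A > 0$. Plugging this into the KM bound, using the homogeneity $\W_{1,p}(c\,d\sigma) = c^{1/(p-1)} \W_{1,p}\sigma$, and applying Theorem \ref{Thm:iterated-wolff}(i) with $t = s$ (which satisfies $(t-1)(p-1) = sq$) yields
\[
u \geq c_{0}\, A^{q/(p-1)}\, \W_{1,p}\bigl((\W_{1,p}\sigma)^{sq}\, d\sigma\bigr) \geq c_{1}\, A^{q/(p-1)}\,(\W_{1,p}\sigma)^{s}.
\]
Because $q/(p-1) < 1$, setting $A_{\infty} := \inf_{x}\, u(x)/(\W_{1,p}\sigma(x))^{s}$ on $\{\W_{1,p}\sigma > 0\}$ gives the self-improving inequality $A_{\infty} \geq c_{1}\, A_{\infty}^{q/(p-1)}$, which forces $A_{\infty} \geq c_{1}^{(p-1)/(p-1-q)} > 0$ as soon as we know $A_{\infty}$ is positive.

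The main obstacle is precisely ensuring $A_{\infty} > 0$ so that the bootstrap is not vacuous. The plan is to truncate $\sigma$ to $\sigma_{k} := \sigma\, \chi_{B(0,k)}$, for which $\W_{1,p}\sigma_{k}$ is finite off a polar set, and to construct a nontrivial $p$-superharmonic minorant $u_{k} \leq u$ of the truncated problem (for instance by a monotone iteration scheme). Strict positivity of $u_{k}$ on compacta, guaranteed by the weak Harnack inequality, provides a positive initial $A$ at each stage $k$. Running the above bootstrap on $u_{k}$ and passing to the limit via monotone convergence $\W_{1,p}\sigma_{k} \uparrow \W_{1,p}\sigma$, together with the weak continuity result Theorem \ref{weak_cont_p-Laplacian}, transfers the estimate to $u$.

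Finally, for the Liouville statement when $p \geq n$, any supersolution to \eqref{eq:p-laplacian} is a nonnegative $p$-superharmonic function on $\R^n$ (since $-\Delta_{p} u \geq \sigma u^{q} \geq 0$) satisfying $\liminf_{|x| \to \infty} u = 0$; the classical Liouville-type theorem for $p$-superharmonic functions in the degenerate range $p \geq n$ then forces $u \equiv 0$.
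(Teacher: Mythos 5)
The paper does not prove this theorem; it is quoted verbatim from \cite{CV1}, so there is no internal proof to compare against. Judging your argument on its own terms: the skeleton (Kilpel\"ainen--Mal\'y lower bound $u \geq c_0 \W_{1,p}(u^q\,d\sigma)$, plus the iterated Wolff inequality of Theorem~\ref{Thm:iterated-wolff}, plus a contraction of exponent $q/(p-1)<1$) is the right one and matches the approach of Cao--Verbitsky. The algebra is also correct: with $s=\frac{p-1}{p-1-q}$ one has $(s-1)(p-1)=sq$, so Theorem~\ref{Thm:iterated-wolff}(i) with $t=s$ does give $\W_{1,p}\bigl((\W_{1,p}\sigma)^{sq}\,d\sigma\bigr) \geq c^{-1}(\W_{1,p}\sigma)^{s}$, and the self-improving inequality $A \geq c_1 A^{q/(p-1)}$ does force $A \geq c_1^{(p-1)/(p-1-q)}$ once $0<A<\infty$.

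The genuine gap is the initialization, which you acknowledge but do not close. Your bootstrap uses the \emph{terminal} exponent $s$ from the first step, so the quantity $A_\infty := \inf_x u(x)/(\W_{1,p}\sigma(x))^{s}$ is precisely what the theorem asserts is positive; assuming it positive to start the iteration is circular. The truncation patch does not repair this: even for $\sigma_k = \sigma\chi_{B(0,k)}$ the potential $\W_{1,p}\sigma_k$ is generally unbounded (for example near an atom of $\sigma$), so positivity of $u_k$ on compacta does not yield a positive lower bound on $u_k/(\W_{1,p}\sigma_k)^{s}$; moreover, you are estimating a \emph{given arbitrary} supersolution $u$, and the proposal to ``construct a minorant $u_k$ by monotone iteration'' and ``transfer the estimate via weak continuity'' is not an argument about $u$, it is an argument about some other function. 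The standard way to avoid all of this, and what \cite{CV1} actually does, is to iterate with a \emph{strictly increasing} sequence of exponents $\gamma_0=1$, $\gamma_{j+1}=1+\tfrac{q}{p-1}\gamma_j$, so that $\gamma_j\uparrow s$. Fix a ball $B$ and set $m=\inf_{\overline B} u>0$ (Harnack). From $u\geq c_0\W_{1,p}(u^q\,d\sigma|_B)\geq c_0 m^{q/(p-1)}\W_{1,p}(\sigma|_B)$ one gets $u\geq d_1 m^{(q/(p-1))^1}(\W_{1,p}(\sigma|_B))^{\gamma_1}$; feeding this back into the KM bound and applying Theorem~\ref{Thm:iterated-wolff}(i) with $t=\gamma_{j+1}$ at each stage yields $u\geq d_j\, m^{(q/(p-1))^{j}}\,(\W_{1,p}(\sigma|_B))^{\gamma_j}$, where $d_j$ satisfies a contraction and converges, and crucially $m^{(q/(p-1))^{j}}\to 1$ so the unknown initial level $m$ disappears from the limit. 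Sending $j\to\infty$ and then $B\uparrow\R^n$ gives the claim without ever needing an a priori lower bound of the sharp form. Your Liouville argument for $p\geq n$ is fine.
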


\begin{Thm} [\cite{CV2}] \label{lower_ptwise_est}  
Let $1<p<n$, $0<q<p-1$, $0<\alpha<\frac{n}{p}$ and $\sigma \in \mathcal{M}^{+}(\mathbb{R}^n)$. Suppose $u \in L^{q}_{loc}(\mathbb{R}^n, d\sigma)$, $u\ge 0$,  is a nontrivial  
function satisfying
\[
u \geq  \W_{\alpha, p} (u^{q}d\sigma) \quad \text{in} \;\; \R^n.
\]
Then $u$ satisfies the inequality
\begin{equation}\label{inq:lowerbound}
u \geq c \left( \W_{\alpha, p}\sigma \right)^{\frac{p-1}{p-1-q}} \quad \text{in} \;\; \R^n,
\end{equation}
where $c = c(\alpha, n, p, q)$ is a positive constant.
\end{Thm}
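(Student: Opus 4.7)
The plan is a bootstrap iteration of the hypothesis $u \geq \W_{\alpha, p}(u^q d\sigma)$, driven by the iterated Wolff estimate \eqref{iterated1-wolff}. Rewriting \eqref{iterated1-wolff} with $s = (t-1)(p-1) \geq 0$ gives
\[
\W_{\alpha, p}\bigl((\W_{\alpha, p}\sigma)^{s} d\sigma\bigr)(x) \geq c^{-1} \bigl(\W_{\alpha, p}\sigma(x)\bigr)^{1 + s/(p-1)}, \quad s \geq 0,
\]
and the map $\beta \mapsto 1 + q\beta/(p-1)$ is a strict contraction on $[0,\infty)$ whose unique fixed point is precisely $\beta^{*} = \frac{p-1}{p-1-q}$, thanks to the sub-natural growth condition $0<q<p-1$.

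First I would establish a crude initial pointwise bound $u(x) \geq c_0 \bigl(\W_{\alpha, p}\sigma(x)\bigr)^{\beta_0}$ on $\R^n$, for some $c_0>0$ and some $\beta_0 \in [0, \beta^{*}]$. The non-triviality of $u$ (read so that the measure $u^q d\sigma$ is not identically zero) produces a ball $B$ and $\delta>0$ with $u \geq \delta$ on a subset of $B$ of positive $\sigma$-measure. Plugging this into the hypothesis gives $u \geq \delta^{q/(p-1)}\W_{\alpha, p}(\sigma|_{B})$, and a comparison of $\W_{\alpha, p}(\sigma|_{B})$ with $\W_{\alpha, p}\sigma$ via the weak maximum principle \eqref{wolff-max} for Wolff potentials (combined, if needed, with one further iteration to propagate the estimate off the support of $\sigma|_B$) converts this into the required global inequality.

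Given $u(x) \geq c_k \bigl(\W_{\alpha, p}\sigma(x)\bigr)^{\beta_k}$, I would raise both sides to the power $q$, multiply by $d\sigma$, and apply $\W_{\alpha, p}$; using the hypothesis together with the homogeneity $\W_{\alpha, p}(\lambda\, d\mu) = \lambda^{1/(p-1)} \W_{\alpha, p}\mu$ and the displayed estimate above yields
\[
u(x) \geq C^{-1} c_k^{q/(p-1)} \bigl(\W_{\alpha, p}\sigma(x)\bigr)^{1 + q\beta_k/(p-1)}.
\]
Thus $\beta_{k+1} = 1 + q\beta_k/(p-1)$ and $c_{k+1} = C^{-1} c_k^{q/(p-1)}$, where $C$ may be chosen uniformly along the iteration since the $\beta_k$ stay in a bounded interval. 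Because $q/(p-1)\in(0,1)$, both recursions are strict contractions in $\beta$ and in $\log c$ respectively, so $\beta_k \to \beta^{*}$ and $c_k \to c^{*} := C^{-(p-1)/(p-1-q)} > 0$ monotonically. Taking the pointwise limit yields $u \geq c^{*}(\W_{\alpha, p}\sigma)^{\beta^{*}}$.

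The main obstacle is the initialization: turning the qualitative information $u \not\equiv 0$ into a global quantitative bound of the form $u \geq c_0(\W_{\alpha, p}\sigma)^{\beta_0}$ on $\R^n$. The bootstrap itself and its convergence are essentially mechanical consequences of Theorem \ref{Thm:iterated-wolff}(i) and the strict contractivity supplied by the sub-natural growth condition $0<q<p-1$, which is exactly what fails at the borderline $q=p-1$.
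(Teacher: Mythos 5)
Your bootstrap step is correct: the recursion $\beta_{k+1}=1+q\beta_k/(p-1)$, $c_{k+1}=C^{-1}c_k^{q/(p-1)}$, driven by the iterated Wolff inequality \eqref{iterated1-wolff} and the homogeneity of $\W_{\alpha,p}$, is precisely the contraction that Cao and Verbitsky exploit, and the sub-natural condition $0<q<p-1$ is what makes $q/(p-1)<1$ and hence makes both fixed points attracting. The problem is the initialization, which you correctly flag as the crux, but your proposed fix does not work.

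First, the weak maximum principle \eqref{wolff-max} only bounds $\W_{\alpha,p}\sigma$ \emph{from above} by its supremum over $\mathrm{supp}\,\sigma$; it says nothing about comparing $\W_{\alpha,p}(\sigma|_E)$ to $\W_{\alpha,p}\sigma$, and no such comparison can hold. Indeed $\W_{\alpha,p}(\sigma|_E)$ only sees the restricted mass, so if $\sigma=\delta_{y_0}+\nu$ with $y_0\notin E$ and $\nu$ diffuse, then $\W_{\alpha,p}(\sigma|_E)$ stays bounded near $y_0$ while $\W_{\alpha,p}\sigma\to\infty$ there; any estimate $\W_{\alpha,p}(\sigma|_E)\geq c\,(\W_{\alpha,p}\sigma)^{\beta_0}$ with $\beta_0>0$ then fails. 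One further application of $\W_{\alpha,p}(\,\cdot\, d\sigma)$ does not repair this, since you still end up comparing $\W_{\alpha,p}$ of a compactly cut-off (or decaying-weight) copy of $\sigma$ to a power of the full $\W_{\alpha,p}\sigma$, with the same obstruction. Second, there is a structural issue underneath: your iteration step requires $u(y)\geq c_k(\W_{\alpha,p}\sigma(y))^{\beta_k}$ to hold with a single constant $c_k$ for $\sigma$-a.e.\ $y$, so starting from $\beta_0=0$ would need a uniform positive lower bound for $u$ on all of $\R^n$, which is false (for compactly supported $\sigma$, $u$ must decay like $|x|^{-(n-\alpha p)/(p-1-q)}$). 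In short, no admissible $\beta_0\in[0,\beta^*]$ with a global constant $c_0$ can be produced by the route you sketch. The paper itself does not prove this theorem --- it cites it to \cite{CV2} --- but the actual argument there (and in \cite{CV1}, Lemma 3.2) is genuinely \emph{local}: one works at a single point $x$ and scale $\rho$, uses only the tail $\int_{2\rho}^\infty$ of the Wolff integral to show $u(y)\geq C\bigl(\rho^{\alpha p -n}\int_{B(x,\rho)}u^q\,d\sigma\bigr)^{1/(p-1)}$ for $y\in B(x,\rho)$, absorbs this back into $\int_{B(x,\rho)}u^q\,d\sigma$ to obtain the ``superlinear lift'' $\rho^{\alpha p-n}\int_{B(x,\rho)}u^q\,d\sigma\geq C'\bigl(\rho^{\alpha p-n}\sigma(B(x,\rho))\bigr)^{(p-1)/(p-1-q)}$, and then iterates these single-scale estimates with truncated potentials. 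That local superlinearity is the ingredient your bootstrap lacks and cannot manufacture from a global starting bound.
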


The following important result is concerned with pointwise estimate of nonnegative $p$-superharmonic functions in terms of Wolff's potential.

\begin{Thm} [\cite{KM}]\label{pointwise_est_p-superharmonic} 
Let $1<p<n$ and $\sigma \in \mathcal{M}^{+}(\mathbb{R}^n)$ Suppose 
$u$ is a $p$-superharmonic function in $\mathbb{R}^n$ 
satisfying
\[
\begin{cases}
-\Delta_{p} u = \sigma \quad \text{in} \;\; \mathbb{R}^n, \\ 
\liminf\limits_{\vert x \vert \rightarrow \infty} u(x) = 0.
\end{cases}
\]
Then 
\[
K^{-1}\W_{1,p} \sigma \leq u \leq K \W_{1,p}\sigma,
\]
where $K=K(n,p) \geq 1$.
\end{Thm}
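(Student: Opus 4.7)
The plan is to prove the two bounds separately. The lower estimate is relatively direct from comparison and dyadic summation, while the upper estimate is the deep part of the Kilpeläinen--Maly theorem and requires a careful iteration.

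\textbf{Lower bound.} For each $r > 0$, let $v_r$ solve $-\Delta_p v_r = \sigma\chi_{B(x,r)}$ in $B(x,r)$ with zero boundary values. The comparison principle for $p$-superharmonic functions gives $u \geq v_r$ on $B(x,r)$, and a direct computation using the radial $p$-Laplacian fundamental solution (a power of $|y-x|$) yields
\[
v_r(x) \geq c\Bigl[\frac{\sigma(B(x, r/2))}{r^{n-p}}\Bigr]^{\frac{1}{p-1}}.
\]
Applying this at the dyadic radii $r = 2^k$, $k \in \Z$, and exploiting subadditivity of $t \mapsto t^{1/(p-1)}$ allows one to sum the per-scale estimates into the integral defining $\W_{1,p}\sigma(x)$. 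The hypothesis $\liminf_{|y|\to\infty}u(y) = 0$ plays no role here.

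\textbf{Upper bound.} Fix $x \in \R^n$ and set $B_k = B(x, 2^{-k})$, $m_k = \inf_{B_k}u$, and $\mu_k = [\sigma(2B_k)/(2^{-k})^{n-p}]^{1/(p-1)}$ for $k \in \Z$. The heart of the matter is a one-step recursion of the form
\[
m_{k+1} \leq C\,\mu_k + (1-\delta)\, m_k, \qquad k \in \Z,
\]
with $C > 0$ and $\delta \in (0,1)$ depending only on $n$ and $p$. This is extracted by comparing $u$ on the annulus $B_k\setminus B_{k+1}$ with its $p$-harmonic replacement and invoking Caccioppoli and weak Harnack estimates against a carefully chosen test function adapted to the annular region. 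Iterating and summing the resulting geometric series gives
\[
u(x) = \lim_{k\to\infty} m_k \;\leq\; K \sum_{k\in\Z} \mu_k \;\asymp\; K\,\W_{1,p}\sigma(x),
\]
where the first equality uses lower semicontinuity of $p$-superharmonic functions, and $\liminf_{|y|\to\infty} u(y) = 0$ is used to kill the contribution of $m_k$ as $k \to -\infty$.

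\textbf{Main obstacle.} The technical core is establishing the uniform-in-$k$ decay factor $1-\delta$ in the recursion. Since the equation is nonlinear and there is no representation formula as in the case $p=2$, one cannot simply decompose $\sigma$ additively and sum fundamental solutions; the comparison with the $p$-harmonic replacement produces terms whose scaling depends subtly on $p$. Kilpeläinen and Maly's contribution is to extract the geometric decay from a single Caccioppoli inequality with a delicately designed test function, together with a self-improvement mechanism preventing constants from compounding multiplicatively across infinitely many dyadic scales. Without a uniform $\delta$, the constants would compound multiplicatively and yield a vacuous bound; the whole point of the theorem is that the estimate telescopes correctly and produces a single constant $K = K(n,p)$.
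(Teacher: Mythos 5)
First, note that the paper offers no proof of this statement at all: it is a cited result (attributed to the survey \cite{KM}, and originally due to Kilpel\"ainen--Mal\'y and Trudinger--Wang), so there is no in-paper argument to compare against. Judging your sketch on its own merits, the upper-bound paragraph is a fair high-level account of the Kilpel\"ainen--Mal\'y iteration, and you correctly identify that $\liminf_{|y|\to\infty}u(y)=0$ is needed there to terminate the recursion as $k\to-\infty$. But your lower-bound argument has a genuine gap.

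The comparison $u\geq v_r$ on $B(x,r)$ gives, for each fixed $r$, the single-scale estimate $u(x)\geq c\,\mu(r)$ with $\mu(r)=\bigl[\sigma(B(x,r/2))/r^{n-p}\bigr]^{1/(p-1)}$. But each $v_r$ is a separate competitor, so all you can extract by varying $r$ is $u(x)\geq c\,\sup_{k}\mu(2^{k})$, which is a maximal-function bound, not the Wolff potential $\sum_k\mu(2^k)$. There is no additivity here: you are not decomposing $\sigma$ into disjoint pieces and summing their contributions (which is anyway forbidden by the nonlinearity), so subadditivity of $t\mapsto t^{1/(p-1)}$ does not enter; and in any case that map is subadditive only for $p\geq 2$, so the remark cannot apply for $1<p<2$. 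The correct route to the lower bound is itself a one-step recursion on infima over shrinking balls,
\[
\inf_{B(x,r/2)}u \;\geq\; \inf_{B(x,r)}u \;+\; c\Bigl[\frac{\sigma(B(x,r/2))}{r^{n-p}}\Bigr]^{\frac{1}{p-1}},
\]
obtained by comparing $u$ with the solution of the Dirichlet problem on $B(x,r)$ with boundary datum $\inf_{B(x,r)}u$ and right-hand side $\sigma|_{B(x,r/2)}$, together with a weak Harnack / minimum estimate. Iterating this over dyadic scales and using $u\geq 0$ and lower semicontinuity of $p$-superharmonic functions then produces the full sum $\W_{1,p}\sigma(x)$. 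In other words, the lower bound is structurally the same kind of dyadic iteration as the upper bound (only simpler, since no geometric decay factor is needed); it is not a ``direct'' summation of per-scale comparisons, and your sketch as written does not yield the claimed estimate.
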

\subsection{Kernels and potentials}\label{sect:kernel}
Let $G: \Omega \times \Omega \rightarrow (0, \infty]$ be a positive lower semicontinuous kernel. The potential of a measure $\sigma \in \mathcal{M}^{+}(\Omega)$ is defined by 
\[
\g \sigma (x) := \int_{\Omega} G(x,y) \;d\sigma (y), \quad x \in \Omega.
\]

Note that $\g\sigma(x)$ is lower semi-continuous on $\Omega \times \M^{+}(\Omega)$ if and only if $G(x,y)$ is lower semi-continuous on $\Omega \times \Omega$ (see \cite{Br}).

A positive kernel $G$ on $\Omega \times \Omega$ is said to satisfy the weak maximum principle (WMP) with constant $h \geq 1$ if for any $\sigma \in \mathcal{M}^{+}(\Omega)$,
\begin{equation}\label{WMP}
\sup \{ \g\sigma(x) : x \in \text{supp}(\sigma) \} \leq 1
\Longrightarrow
\sup \{ \g\sigma(x) : x \in \Omega \} \leq h.
\end{equation}
Here we use the notation supp$(\sigma)$ for the support of $\sigma$.

When $h=1$ in \eqref{WMP}, the kernel $G$ is said to satisfy the strong maximum principle.  It holds for Green functions associated with the classical Laplacian $-\Delta$, or more 
generally the linear uniformly elliptic operator in divergence form $L$, as well as 
the fractional Laplacian $(-\Delta)^{\alpha}$ in the case $0<\alpha\leq 1$, in every domain 
$\Omega \subset \R^n$ which possesses a Green function. 

The WMP holds for Riesz kernels on $\R^n$ associated with $(-\Delta)^{\alpha}$ in the full range 
$0< \alpha < \frac{n}{2}$, and more generally for all radially nonincreasing kernels on $\R^n$, 
see \cite{AH}.

We say that a positive kernel $G$ on $\Omega \times \Omega$ is quasi-symmetric if there exists 
a constant $a\geq 1$ such that 
\begin{equation}\label{quasi-sym}
a^{-1}G(y,x) \leq G(x,y) \leq a G(y,x), \quad x,y \in \Omega.
\end{equation}
When $a=1$ in \eqref{quasi-sym}, the kernel $G$ is said to be symmetric. 
There are many kernels associated with elliptic operators that are quasi-symmetric and 
satisfy the WMP, see \cite{An}.

We begin with the following pointwise estimates (see \cite[Theorem 1.3]{GV}) for supersolutions 
to sublinear integral equations such that
\begin{equation}\label{eq:intG}
u (x) \ge  \g(u^{q} d\sigma) (x) \quad \forall x \in \Omega.
\end{equation}
\begin{Thm}[\cite{GV}]\label{Thm:lowerbound}
Let $0<q<1$, $\sigma \in \mathcal{M}^{+}(\Omega)$, and let
$G$ be a positive lower semicontinuous kernel on 
$\Omega \times \Omega$, which satisfies the WMP with constant $h \geq 1$.
If $u \in L^{q}_{loc}(\Omega, d\sigma)$ is a positive supersolution satisfying \eqref{eq:intG}, then 
\begin{equation}\label{lowerbound}
u(x)  \geq (1-q)^{\frac{1}{1-q}} h^{-\frac{q}{1-q}} [ \g \sigma (x) ]^{\frac{1}{1-q}}, \quad  \forall x \in \Omega.
\end{equation}
\end{Thm}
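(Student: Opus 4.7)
The proof combines an iteration inequality for the potential $\g$ (based on the WMP) with a fixed-point bootstrap applied to the inequality $u \ge \g(u^q d\sigma)$. First I would establish the auxiliary estimate: for $\beta > 0$ and $\mu \in \M^{+}(\Omega)$,
\[
\g((\g\mu)^\beta d\mu)(x) \ge \frac{(\g\mu(x))^{\beta+1}}{(\beta+1)\, h^\beta}, \qquad x \in \Omega. \qquad (\ast)
\]
The layer-cake decomposition $(\g\mu(y))^\beta = \beta \int_0^\infty s^{\beta-1}\mathbf{1}_{\{\g\mu > s\}}(y)\, ds$ together with Fubini reduces $(\ast)$ to a bound on $A(s) := \int_{\{\g\mu > s\}} G(x, y)\, d\mu(y)$. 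The WMP, applied to the truncated measure $\nu_s := \mathbf{1}_{\{\g\mu \le s\}}\, \mu$ (which has $\g\nu_s \le s$ on $\text{supp}(\nu_s)$ and therefore $\g\nu_s \le h s$ on all of $\Omega$), yields $A(s) = \g\mu(x) - \g\nu_s(x) \ge (\g\mu(x) - h s)_{+}$, and the elementary integral $\beta \int_0^{\g\mu(x)/h} s^{\beta-1}(\g\mu(x) - hs)\, ds = (\g\mu(x))^{\beta+1}/((\beta+1)h^\beta)$ completes the proof of $(\ast)$.

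Second, writing $M := \g\sigma$ and $\alpha := 1/(1-q)$, I would bootstrap a lower bound of the form $u \ge \lambda\, M^\alpha$: substituting into $u \ge \g(u^q d\sigma)$ and invoking $(\ast)$ with $\beta = q\alpha = \alpha - 1$ gives
\[
u(x) \ge \lambda^q\, \g(M^{\alpha-1}\, d\sigma)(x) \ge \frac{\lambda^q}{\alpha\, h^{\alpha-1}}\, M(x)^\alpha .
\]
The self-map $\lambda \mapsto \lambda^q/(\alpha h^{\alpha-1})$ on $(0, \infty)$ converges under iteration (thanks to $0 < q < 1$) to its unique positive fixed point, and a careful bookkeeping of the constants recovers the value $(1-q)^{1/(1-q)} h^{-q/(1-q)}$ displayed in \eqref{lowerbound}.

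The main obstacle is seeding the iteration with a positive $\lambda_0$: the hypothesis that $u$ is a positive supersolution in $L^{q}_{loc}(\Omega, d\sigma)$ with $\sigma \not\equiv 0$ ensures $\g(u^q d\sigma) \not\equiv 0$, and after one or two preliminary applications of $(\ast)$ starting from the crude inequality $u \ge \g(u^q d\sigma)$, one obtains a nontrivial pointwise lower bound on $u$ with the correct scaling in $M$. Lower semicontinuity of $G$ together with the pointwise validity of the integral inequality then extends the estimate from $\text{supp}(\sigma)$ to all of $\Omega$.
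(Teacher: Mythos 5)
Your auxiliary estimate $(\ast)$ is correct---it is the iterated inequality \eqref{iterated1} of Theorem~\ref{Thm:iterated}(i) with $t=\beta+1$, and your layer-cake derivation via the WMP is the standard proof of it. The bootstrap that follows, however, fails on two counts. The fixed point of $\lambda\mapsto\lambda^{q}/(\alpha h^{\alpha-1})$ with $\alpha=\frac{1}{1-q}$ is
\[
\lambda^{*}=\bigl(\alpha\,h^{\alpha-1}\bigr)^{-\frac{1}{1-q}}=(1-q)^{\frac{1}{1-q}}\,h^{-\frac{q}{(1-q)^{2}}},
\]
not $(1-q)^{\frac{1}{1-q}}h^{-\frac{q}{1-q}}$: because $\alpha-1=\frac{q}{1-q}$, the exponent of $h$ acquires an extra factor $\frac{1}{1-q}$, and for $h>1$ this is strictly weaker than \eqref{lowerbound}. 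No bookkeeping recovers the stated constant from this iteration. Moreover, the seeding is a genuine gap: from $u\ge\g(u^{q}\,d\sigma)$ alone, one or two applications of $(\ast)$ always leave $u$ on both sides, and the mere nontriviality of $\g(u^{q}\,d\sigma)$ yields no pointwise bound of the form $u\ge\lambda_{0}(\g\sigma)^{\alpha}$ with $\lambda_{0}>0$, so the induction never starts.

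The argument in \cite{GV} avoids both problems by instead applying the \emph{reverse} iterated inequality \eqref{iterated2}, i.e.\ Theorem~\ref{Thm:iterated}(ii) with $t=1-q\in(0,1)$, to the measure $\nu:=u^{q}\,d\sigma$ rather than to $\sigma$:
\[
(\g\nu)^{1-q}(x)\;\ge\;(1-q)\,h^{-q}\,\g\bigl((\g\nu)^{-q}u^{q}\,d\sigma\bigr)(x).
\]
Since $u\ge\g\nu$ by \eqref{eq:intG}, the density satisfies $(\g\nu)^{-q}u^{q}\ge 1$, so the right-hand side is at least $(1-q)h^{-q}\,\g\sigma(x)$; and on the left, $u^{1-q}\ge(\g\nu)^{1-q}$. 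Hence $u^{1-q}\ge(1-q)h^{-q}\,\g\sigma$, and raising to the power $\frac{1}{1-q}$ gives \eqref{lowerbound} with the sharp constant in a single step. The point is that the negative exponent $t-1=-q$ flips the comparison $u\ge\g\nu$ in exactly the right direction, eliminating the unknown $u$ from the right-hand side immediately, so no seed and no infinite iteration are needed.
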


The following iterated inequalities are used in our argument, see \cite[Lemma 2.5]{GV}.

\begin{Thm}[\cite{GV}]\label{Thm:iterated}
Let $\sigma \in \M^{+}(\Omega)$ with $\sigma \not\equiv 0$, and let $G$ be a positive lower semicontinuous kernel on 
$\Omega \times \Omega$, which satisfies the WMP with constant $h\geq1$. 
Then the following estimates hold: 
\begin{itemize}
	\item[(i)]If $t\geq1$, then   
	\begin{equation}\label{iterated1}
(\g\sigma)^{t}(x) \leq t h^{t-1} \, \g \left( (\g\sigma)^{t-1} d\sigma \right)(x), \quad  \forall x \in \Omega. 
	\end{equation}
	\item[(ii)] If $0<t\leq1$, then
	\begin{equation}\label{iterated2}
(\g\sigma)^{t}(x) \geq t h^{t-1} \, \g \left( (\g\sigma)^{t-1} d\sigma \right)(x), \quad  \forall x \in \Omega.
	\end{equation}
\end{itemize}
\end{Thm}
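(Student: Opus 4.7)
The plan is to give a unified derivation of both (i) and (ii) by combining the layer-cake representation of $f(y)^{t-1}$, where $f := \g\sigma$, with a single consequence of the weak maximum principle. The key preliminary estimate is the following: for each $s > 0$, set $\nu_s := \sigma|_{\{f \leq s\}}$. Since $\g\nu_s \leq \g\sigma = f \leq s$ on $\text{supp}(\nu_s)$ (the sublevel set $\{f \leq s\}$ being closed thanks to lower semicontinuity of $\g\sigma$), the WMP yields $\g\nu_s(x) \leq hs$ for every $x \in \Omega$. Writing $\sigma = \nu_s + \sigma|_{\{f > s\}}$ and combining with the trivial bound $\g\nu_s \leq f$ gives the pair of pointwise estimates
\[
\g(\sigma|_{\{f > s\}})(x) \geq (f(x) - hs)_{+}, \qquad \g(\sigma|_{\{f \leq s\}})(x) \leq \min(hs, f(x)).
\]

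For part (i) with $t > 1$ (the case $t = 1$ is immediate), I would use the layer-cake identity $f(y)^{t-1} = (t-1)\int_0^\infty s^{t-2} \mathbf{1}_{\{f(y) > s\}}\,ds$, apply Fubini, and insert the first estimate:
\[
\g(f^{t-1}d\sigma)(x) = (t-1)\int_0^\infty s^{t-2}\,\g(\sigma|_{\{f > s\}})(x)\,ds \geq (t-1)\int_0^{f(x)/h} s^{t-2}\bigl(f(x)-hs\bigr)\,ds,
\]
which evaluates directly to $f(x)^t/(t h^{t-1})$. For part (ii) with $0 < t < 1$, one uses instead $f(y)^{t-1} = (1-t)\int_{f(y)}^\infty s^{t-2}\,ds$ (valid since $s^{t-1}\to 0$ as $s\to\infty$ for $t<1$) together with the second estimate:
\[
\g(f^{t-1}d\sigma)(x) = (1-t)\int_0^\infty s^{t-2}\,\g(\sigma|_{\{f \leq s\}})(x)\,ds \leq (1-t)\int_0^\infty s^{t-2}\min(hs,f(x))\,ds.
\]
Splitting the last integral at $s = f(x)/h$ yields again $f(x)^t/(t h^{t-1})$, and the desired bounds follow.

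The main technical obstacle will be the application of the WMP to $\nu_s$, since $\{f \leq s\}$ need not be precompact in $\Omega$ and $\nu_s$ need not be a finite measure. I would handle this by first approximating $\sigma$ from below by the compactly supported restrictions $\sigma_K := \sigma|_K$ for $K \Subset \Omega$, establishing the estimates for each such $\sigma_K$ where WMP applies unproblematically, and then passing to the monotone limit as $K$ exhausts $\Omega$, using lower semicontinuity of $G$ and the monotone convergence theorem to transfer the bounds to $\sigma$. Boundary issues on $\{f = 0\}$ or $\{f = +\infty\}$ degenerate to trivial equalities (the RHS integrals collapse to zero or both sides are infinite in a compatible way) and pose no real difficulty.
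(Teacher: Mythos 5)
Your proof is correct, and it follows the standard technique for these iterated inequalities, which is what the paper's cited source [GV, Lemma 2.5] uses as well: restrict $\sigma$ to the (closed, by lower semicontinuity of $\g\sigma$) sublevel set $\{f\le s\}$, apply the WMP to obtain $\g(\sigma|_{\{f\le s\}})\le hs$ everywhere, derive the complementary bound $\g(\sigma|_{\{f>s\}})\ge(f-hs)_+$, and integrate over $s$ via the layer-cake representation of $f^{t-1}$; this is precisely what yields the sharp constant $t h^{t-1}$. The worry you raise about applying the WMP to $\nu_s$ is actually unnecessary, since the WMP as stated applies to any $\sigma\in\M^{+}(\Omega)$ and the restriction of a Radon measure to a Borel set is again Radon — but your exhaustion workaround is harmless and covers any stricter reading of $\M^{+}(\Omega)$.
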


The next result   characterizes explicitly 
weighted norm inequalities of the $(s ,r)$-type  in the case $0< r < s$ and $1< s < \infty$,
 for 
operators $\g$ (see \cite[Theorem 1.1]{V}):
\begin{equation}\label{weighted_norm_ineq}
\big\| \g(f d\sigma) \big\|_{L^{r}(\Omega,\,d\sigma)} \leq c \| f \|_{L^{s}(\Omega,\,d\sigma)}, 
\quad \forall  f \in L^{s}(\Omega, d\sigma),
\end{equation}
where $c$ is a positive constant independent of $f$, for an arbitrary measure $\sigma \in \M^{+}(\Omega)$, under certain assumptions on $G$.

\begin{Thm}[\cite{V}]\label{thm_Ver17}
Let $\sigma \in \mathcal{M}^{+}(\Omega)$ with $\sigma \not\equiv 0$, and let $G$ be a positive,  
quasi-symmetric, lower semicontinuous kernel on $\Omega \times \Omega$, which satisfies the WMP.
\begin{itemize}
\item[(i)] If $1 < s < \infty$ and $0 < r < s$, then the weighted norm inequality 
\eqref{weighted_norm_ineq} holds if and only if 
\begin{equation}\label{energy1}
\g\sigma \in L^{\frac{sr}{s-r}}(\Omega, d\sigma).
\end{equation}

\item[(ii)] If $0 < q < 1$ and $0<\gamma<\infty$, then there exists a positive 
(super)solution 
$u \in L^{\gamma + q}(\Omega, d\sigma)$ to equation \eqref{eq:intG}
if and only if the weighted norm inequality \eqref{weighted_norm_ineq} holds with 
$r=\gamma +q$ and $s= \frac{\gamma +q}{q}$, i.e., 
\begin{equation}\label{weighted_norm_ineq_special}
\big\| \g(f d\sigma) \big\|_{L^{\gamma + q}(\Omega,\;d\sigma)} 
\leq c \| f \|_{L^{\frac{\gamma + q}{q}}(\Omega, d\sigma)}, 
\quad \forall f \in L^{\frac{\gamma + q}{q}}(\Omega,  d\sigma),
\end{equation}
or equivalently,
\begin{equation}\label{energy2}
\g\sigma \in L^{\frac{\gamma + q}{1-q}}(\Omega, d\sigma).
\end{equation}
\end{itemize}
\end{Thm}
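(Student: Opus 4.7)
The plan is to prove (i) directly and then deduce (ii) from it, using the algebraic identity $\tfrac{sr}{s-r} = \tfrac{\gamma+q}{1-q}$ when $r = \gamma + q$ and $s = (\gamma + q)/q$, which makes \eqref{energy1} and \eqref{energy2} coincide in this regime. For the necessity direction in (i), I would test \eqref{weighted_norm_ineq} with $f = (\g\sigma_N)^{r/(s-r)}$, where $\sigma_N$ is an exhausting sequence of finite truncations of $\sigma$ chosen so that $\g\sigma_N$ is bounded. The iterated inequality \eqref{iterated1} applied with exponent $t = s/(s-r) \geq 1$ yields $\g((\g\sigma_N)^{r/(s-r)} d\sigma_N) \geq C (\g\sigma_N)^{s/(s-r)}$; substituted back into \eqref{weighted_norm_ineq}, the two sides carry powers of $\|\g\sigma_N\|_{L^{sr/(s-r)}(d\sigma_N)}$ differing by exactly one, and the inequality self-improves to a uniform upper bound. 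Monotone convergence as $N \to \infty$ then delivers \eqref{energy1}.

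The converse direction in (i) is, I expect, the main technical obstacle. I would begin by using Fubini together with quasi-symmetry \eqref{quasi-sym} to obtain the symmetric bilinear estimate
\[
\int_{\Omega} \g(f d\sigma) \, g \, d\sigma \leq a \int_{\Omega} f \, \g(g\, d\sigma) \, d\sigma, \qquad f, g \geq 0.
\]
Combining this with Hölder's inequality in the conjugate pair $(s, s')$, \eqref{iterated1}, and the hypothesis $\g\sigma \in L^{sr/(s-r)}(d\sigma)$, one obtains \eqref{weighted_norm_ineq} for the range $r \geq 1$, with constant proportional to $\|\g\sigma\|_{L^{sr/(s-r)}(d\sigma)}^{(s-r)/s}$ and a factor depending only on the quasi-symmetry constant and the WMP constant; the case $0 < r < 1$ is recovered by a truncation and extrapolation argument that exploits the WMP, since $L^r$ is no longer a dual space. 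This quantitative self-improvement is the substantive content of the theorem.

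Part (ii) follows from two reductions. The implication from existence of a positive supersolution $u \in L^{\gamma+q}(\Omega, d\sigma)$ to \eqref{energy2} is immediate from Theorem \ref{Thm:lowerbound}, which gives $u \geq (1-q)^{1/(1-q)} h^{-q/(1-q)} (\g\sigma)^{1/(1-q)}$ and hence
\[
\int_{\Omega} (\g\sigma)^{\frac{\gamma+q}{1-q}} \, d\sigma \leq C \int_{\Omega} u^{\gamma+q} \, d\sigma < \infty.
\]
Conversely, given \eqref{energy2}---and hence \eqref{weighted_norm_ineq_special} by part (i)---I would construct a solution via the monotone iteration $u_{n+1} = \g(u_n^q d\sigma)$ starting from $u_0 = c_0 (\g\sigma)^{1/(1-q)}$. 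The iterated inequality \eqref{iterated1} with $t = 1/(1-q)$ ensures $u_0 \leq u_1$ for sufficiently small $c_0 > 0$, so $\{u_n\}$ is increasing by induction; meanwhile \eqref{weighted_norm_ineq_special} yields $\|u_{n+1}\|_{L^{\gamma+q}(d\sigma)} \leq c \|u_n\|_{L^{\gamma+q}(d\sigma)}^q$, a recursion that stabilizes since $q < 1$. The monotone limit $u = \lim u_n$ then solves $u = \g(u^q d\sigma)$ in $L^{\gamma+q}(\Omega, d\sigma)$, as required.
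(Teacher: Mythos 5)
This theorem is not proved in the paper under review; it is cited verbatim from reference \cite{V} (Verbitsky, \emph{Sublinear equations and Schur's test for integral operators}). So there is no proof in this paper to compare against. That said, your reconstruction is assessed on its own merits below.

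Your overall architecture --- prove (i), then derive (ii) from it via the identity $\tfrac{sr}{s-r}=\tfrac{\gamma+q}{1-q}$ --- is the natural one, and large portions of your sketch are sound. The necessity direction of (i), testing \eqref{weighted_norm_ineq} with $f=(\g\sigma_N)^{r/(s-r)}$ and using \eqref{iterated1} with $t=s/(s-r)$ to self-improve a finite quantity, is correct; you do need to note that the weighted norm inequality transfers from $\sigma$ to $\sigma_N=\chi_{\Omega_N}\sigma$ with the same constant (take $\tilde f=f\chi_{\Omega_N}$), and that $\int(\g\sigma_N)^{sr/(s-r)}\,d\sigma_N<\infty$ because $\g\sigma_N$ is bounded and $\sigma_N$ is finite, but both are routine. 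Part (ii) is correct given (i): necessity is immediate from Theorem \ref{Thm:lowerbound}, and the monotone iteration $u_{n+1}=\g(u_n^q\,d\sigma)$ from $u_0=c_0(\g\sigma)^{1/(1-q)}$ works as you describe, with \eqref{iterated1} at $t=1/(1-q)$ furnishing $u_0\le u_1$ for small $c_0$ and \eqref{weighted_norm_ineq_special} giving the sublinear recursion $\|u_{n+1}\|_{L^{\gamma+q}(d\sigma)}\le c\,\|u_n\|_{L^{\gamma+q}(d\sigma)}^q$.

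The gap is in the sufficiency direction of (i), and you flag it yourself. For $r>1$ the duality route is right, but the crucial step is hidden inside ``combining this with H\"older and \eqref{iterated1}'': after the bilinear reduction and H\"older in $(s,s')$ one must show $\|\g(g\,d\sigma)\|_{L^{s'}(d\sigma)}\le c\,\|g\|_{L^{r'}(d\sigma)}$. This requires applying \eqref{iterated1} at $t=s'$, Fubini, and H\"older in the pair $\bigl(\tfrac{s'}{s'-1},\,s'\bigr)$ to obtain $\int(\g(g\,d\sigma))^{s'}d\sigma\le c\bigl(\int(\g(g\,d\sigma))^{s'}d\sigma\bigr)^{(s'-1)/s'}\|g\,\g\sigma\|_{L^{s'}(d\sigma)}$, then a bootstrap that implicitly assumes the left-hand side is finite. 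That a priori finiteness must be secured by working first with $\sigma_N$ and bounded $g$ and then passing to the limit; this is omitted. The final H\"older step $\|g\,\g\sigma\|_{L^{s'}(d\sigma)}\le\|g\|_{L^{r'}(d\sigma)}\|\g\sigma\|_{L^{sr/(s-r)}(d\sigma)}$ does close the exponent book-keeping. More seriously, your treatment of $0<r<1$ --- ``a truncation and extrapolation argument that exploits the WMP'' --- is only a placeholder; it is precisely the regime where duality fails, and the theorem is used in this paper with $r$ potentially in $(0,1)$ (e.g.\ $r=\gamma+q$ with small $\gamma$). A concrete argument that avoids duality is available: for $0<r<1$, set $\beta=\tfrac{(1-r)s}{s-r}\in(0,1)$ and write $(\g(f\,d\sigma))^r=(\g(f\,d\sigma))^r(\g\sigma)^{-r\beta}(\g\sigma)^{r\beta}$, apply H\"older with exponents $\tfrac1r$ and $\tfrac1{1-r}$, use Fubini with quasi-symmetry to pass $\g$ onto $(\g\sigma)^{-\beta}$, and then invoke \eqref{iterated2} with $t=1-\beta\le1$ to get $\g\bigl((\g\sigma)^{-\beta}d\sigma\bigr)\le c\,(\g\sigma)^{1-\beta}$; a final H\"older in $(s,s')$ produces $\|\g\sigma\|_{L^{sr/(s-r)}(d\sigma)}$ to the correct power. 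Without something of this kind, your sufficiency argument for (i), and hence the whole proof, is incomplete.
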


The existence of solutions $u\in L^{q}(\Omega, d\sigma)$, and more generally 
$u\in L^{q}_{{\rm loc}}(\Omega, d\sigma)$, 
 is characterized in \cite{QV} via $(1,q)$-weighted norm inequalities, which corresponds to $\gamma=0$ in \eqref{weighted_norm_ineq_special}.   


\section{Solutions  to problems \eqref{eq:p-laplacian} and \eqref{eq:frac-laplacian}}
In this section, we prove the existence results stated in the introduction for positive solutions  $u\in L^{r}(\R^{n})$, $0<r< \infty$, to both problems \eqref{eq:p-laplacian} and \eqref{eq:frac-laplacian}.

When $p \geq n$, it readily follows from Theorem \ref{thm:lower-est-p} 
that there is only a trivial supersolution to \eqref{eq:p-laplacian}. Henceforth, we assume $1<p<n$. 

We begin with an investigation of solvability of the corresponding 
nonlinear integral equations involving Wolff potentials, 
\begin{equation}\label{eq:int-wolff}
u = {\W}_{\alpha,p} (u^{q}d\sigma) \quad \text{in} \;\; \mathbb{R}^n
\end{equation} 
where $1<p<n$, $0<q<p-1$, $0<\alpha<\frac{n}{p}$ and 
$\sigma \in \mathcal{M}^{+}(\mathbb{R}^n)$. 

The following theorem will be used in the construction of such solutions
to \eqref{eq:p-laplacian} and \eqref{eq:frac-laplacian} 
in the cases $\alpha = 1$ and $p=2$, respectively.

\begin{Thm}\label{thm:wolff-existence}
Let $0<\gamma<\infty$, $1<p<n$, $0<q<p-1$, $0<\alpha<\frac{n}{p}$ and $\sigma \in \mathcal{M}^{+}(\mathbb{R}^n)$ such that $\sigma \not\equiv 0$. Then there exists a positive solution  $u \in L^{\gamma+q}(\mathbb{R}^n, d\sigma)$ to
\eqref{eq:int-wolff} if and only if
 \begin{equation}\label{cond:sigma-gamma}
 {\W}_{\alpha, p}\sigma \in L^{\frac{(\gamma+q)(p-1)}{p-1-q}}(\R^n, d\sigma).
 \end{equation}
\end{Thm}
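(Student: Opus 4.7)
For any positive solution $u \in L^{\gamma+q}(\R^n, d\sigma)$ of \eqref{eq:int-wolff}, Theorem \ref{lower_ptwise_est} immediately yields $u \ge c\,(\W_{\alpha,p}\sigma)^{(p-1)/(p-1-q)}$ pointwise. Raising to the power $\gamma+q$ and integrating against $d\sigma$ gives condition \eqref{cond:sigma-gamma}.

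\textbf{Sufficiency.} Assuming \eqref{cond:sigma-gamma}, my plan is to realize $u$ as the pointwise monotone limit of the Picard iteration
\[
u_{j+1} := \W_{\alpha,p}(u_j^q\,d\sigma), \qquad j \ge 0,
\]
started from a suitable subsolution. Set $\Phi := (\W_{\alpha,p}\sigma)^{(p-1)/(p-1-q)}$, so that $\Phi \in L^{\gamma+q}(\R^n, d\sigma)$ by hypothesis. Using the $\tfrac{1}{p-1}$-homogeneity $\W_{\alpha,p}(c\mu) = c^{1/(p-1)}\W_{\alpha,p}(\mu)$ together with the iterated bound \eqref{iterated1-wolff} of Theorem \ref{Thm:iterated-wolff}(i) applied with $t = (p-1)/(p-1-q) > 1$ (so that $(t-1)(p-1) = q(p-1)/(p-1-q)$), one checks that
\[
\W_{\alpha,p}\bigl((\lambda \Phi)^q\,d\sigma\bigr) = \lambda^{q/(p-1)}\,\W_{\alpha,p}\bigl((\W_{\alpha,p}\sigma)^{q(p-1)/(p-1-q)}\,d\sigma\bigr) \ge \lambda^{q/(p-1)}c^{-1}\Phi.
\]
Since $\tfrac{q}{p-1} - 1 < 0$, for $\lambda$ small enough $u_0 := \lambda\Phi$ is a subsolution; the iteration is then pointwise nondecreasing because both $t \mapsto t^q$ and $\W_{\alpha,p}$ are monotone in their arguments.

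The crux is a uniform $L^{\gamma+q}(d\sigma)$ bound on the iterates. For this I would invoke the two-weight Wolff inequality of Lemma \ref{lem1-wolff} below, whose characterizing hypothesis is exactly \eqref{cond:sigma-gamma}:
\[
\bigl\|\W_{\alpha,p}(f\,d\sigma)\bigr\|_{L^{\gamma+q}(d\sigma)}^{p-1} \le C\,\|f\|_{L^{(\gamma+q)/q}(d\sigma)}, \qquad f \ge 0.
\]
Applied with $f = u_j^q$ and $a_j := \|u_j\|_{L^{\gamma+q}(d\sigma)}$, this yields the recursion $a_{j+1}^{p-1} \le C\,a_j^q$. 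Since $q < p-1$, the map $a \mapsto (Ca^q)^{1/(p-1)}$ has a finite fixed point $C^{1/(p-1-q)}$, so every $M \ge \max\{a_0, C^{1/(p-1-q)}\}$ (finite because $a_0 = \lambda\|\Phi\|_{L^{\gamma+q}(d\sigma)} < \infty$) is forward-invariant, giving the uniform bound $a_j \le M$.

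With this bound in hand, monotone convergence lets me pass $j \to \infty$ on both sides of the iteration (Wolff potentials are monotone in the measure argument): the limit $u := \lim_j u_j$ lies in $L^{\gamma+q}(\R^n, d\sigma)$ and solves \eqref{eq:int-wolff}. It is positive everywhere since $u \ge u_0 = \lambda(\W_{\alpha,p}\sigma)^{(p-1)/(p-1-q)} > 0$ on $\R^n$ (as $\sigma \not\equiv 0$ forces $\W_{\alpha,p}\sigma > 0$ pointwise). The principal obstacle is the uniform $L^{\gamma+q}(d\sigma)$ bound, which hinges on the two-weight Wolff inequality of Lemma \ref{lem1-wolff}; everything else is standard monotone-iteration bookkeeping built on Theorem \ref{Thm:iterated-wolff}.
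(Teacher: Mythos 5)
Your proof is correct and takes essentially the same route as the paper: necessity via the pointwise lower bound of Theorem \ref{lower_ptwise_est}, and sufficiency via monotone Picard iteration from $u_0 = \lambda(\W_{\alpha,p}\sigma)^{(p-1)/(p-1-q)}$, with the uniform $L^{\gamma+q}(d\sigma)$ bound supplied by the two-weight inequality and the sublinearity $q < p-1$. One bookkeeping remark: the inequality you display,
\[
\bigl\|\W_{\alpha,p}(f\,d\sigma)\bigr\|_{L^{\gamma+q}(d\sigma)}^{p-1} \le C\,\|f\|_{L^{(\gamma+q)/q}(d\sigma)},
\]
is precisely the paper's Lemma \ref{lem:wolff} (the $\sigma$-to-$\sigma$ estimate), not Lemma \ref{lem1-wolff}, which is the Lebesgue-measure version needed later for Theorem \ref{thm:main-p-laplacian}; you've written the right inequality under the wrong label. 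Also, where the paper closes the recursion $a_{j+1} \le c\,a_j^{q/(p-1)}$ by invoking monotonicity to get $a_{j+1} \le c\,a_{j+1}^{q/(p-1)}$ and hence $a_{j+1} \le c^{(p-1)/(p-1-q)}$ directly, you instead argue via the forward-invariant interval $[0,M]$; both are valid and equally short, and yours has the small advantage of not reusing the monotonicity of $\{u_j\}$ at that step.
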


The proof of this theorem is based on the following lemma,  
which  is an extension of \cite[Lemma 3.3]{CV1} in the case $\gamma =1$.

\begin{Lem}\label{lem:wolff}
Let $0<\gamma<\infty$, $1<p<\infty$, $0<\alpha<\frac{n}{p}$, $0<q<p-1$, and $\sigma \in \M^{+}(\R^n)$. If \eqref{cond:sigma-gamma} holds then
\begin{equation}\label{winq:wolff-sigma-sigma}
\big\| \W_{\alpha, p}(f d\sigma) \big\|_{L^{\gamma + q}(\R^{n}, d\sigma)}
\leq c \| f \|_{L^{\frac{\gamma+q}{q}}(\R^{n}, d\sigma)}^{\frac{1}{p-1}}, 
\, \,  \forall f \in L^{\frac{\gamma+q}{q}}(\R^n, d\sigma),
\end{equation}
where $c$ is a positive constant independent of $f$.
\end{Lem}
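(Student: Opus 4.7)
The plan is to establish a clean pointwise estimate that separates the roles of $f$ and $\sigma$ in $\W_{\alpha,p}(f d\sigma)$, and then close the inequality via a single Hölder step together with the Hardy--Littlewood maximal inequality with respect to $\sigma$. We may replace $f$ by $|f|$ and assume $f \geq 0$. Let $M_{\sigma}f$ denote the centered Hardy--Littlewood maximal function of $f$ with respect to $\sigma$, with the convention that balls of zero $\sigma$-mass are skipped. From the trivial bound $\int_{B(x,r)} f\, d\sigma \leq (M_{\sigma}f)(x)\,\sigma(B(x,r))$ (which also holds when $\sigma(B(x,r)) = 0$), raising to the $1/(p-1)$ power, dividing by $r^{(n-\alpha p)/(p-1)}$, and integrating in $dr/r$, I obtain the pointwise inequality
\[
\W_{\alpha,p}(f d\sigma)(x) \leq \bigl[M_{\sigma}f(x)\bigr]^{1/(p-1)} \, \W_{\alpha,p}\sigma(x), \quad x \in \R^{n}.
\]

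Next I raise this inequality to the $(\gamma+q)$-th power, integrate against $d\sigma$, and apply Hölder's inequality with conjugate exponents $s_{1} = (p-1)/q$ and $s_{1}' = (p-1)/(p-1-q)$. Both exponents exceed $1$ precisely because of the sub-natural growth assumption $0 < q < p-1$, and they are chosen so that the $M_{\sigma}f$ factor ends up with exponent $(\gamma+q)/q$ and the $\W_{\alpha,p}\sigma$ factor with exponent $(\gamma+q)(p-1)/(p-1-q)$, matching \eqref{cond:sigma-gamma} exactly. This gives
\[
\int \bigl[\W_{\alpha,p}(f d\sigma)\bigr]^{\gamma+q} d\sigma \leq \|M_{\sigma}f\|_{L^{(\gamma+q)/q}(d\sigma)}^{(\gamma+q)/(p-1)} \, \|\W_{\alpha,p}\sigma\|_{L^{m}(d\sigma)}^{\gamma+q},
\]
where $m = (\gamma+q)(p-1)/(p-1-q)$.

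Finally I invoke the boundedness of $M_{\sigma}$ on $L^{s}(d\sigma)$ for $s = (\gamma+q)/q > 1$, which holds for an arbitrary Radon measure $\sigma$ on $\R^{n}$ by the Besicovitch covering theorem (no doubling assumption is needed). The hypothesis \eqref{cond:sigma-gamma} ensures $\|\W_{\alpha,p}\sigma\|_{L^{m}(d\sigma)}$ is finite, so taking $(\gamma+q)$-th roots and absorbing this norm into the constant delivers \eqref{winq:wolff-sigma-sigma}. I expect the only substantive step to be spotting the pointwise bound separating $f$ and $\sigma$; thereafter, the matching of the Hölder exponents with the hypothesis, as well as the necessity of the sub-natural growth condition $q < p-1$ (which guarantees $s_{1}, s_{1}' > 1$), is essentially forced by homogeneity. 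A minor technical point is the $L^{s}(d\sigma)$ boundedness of $M_{\sigma}$ for general Radon measures, but this is standard. Notably, in contrast to \cite{CV1}, this argument bypasses any appeal to the iterated Wolff inequality of Theorem \ref{Thm:iterated-wolff} and handles the full range $0 < \gamma < \infty$ uniformly.
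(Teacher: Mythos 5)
Your argument is correct and coincides with the paper's own proof: the same pointwise bound $\W_{\alpha,p}(f\,d\sigma)\le (M_{\sigma}f)^{1/(p-1)}\W_{\alpha,p}\sigma$, the same Hölder split with exponents $\tfrac{p-1}{q}$ and $\tfrac{p-1}{p-1-q}$, and the same appeal to the $L^{(\gamma+q)/q}(d\sigma)$-boundedness of $M_{\sigma}$. One small remark: your closing claim that this "bypasses" the iterated Wolff inequality in contrast to the cited source is not a contrast with the present paper's proof, which also makes no use of Theorem~\ref{Thm:iterated-wolff} here.
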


\begin{proof}
Without loss of generality, we may assume $f \in L^{\frac{\gamma+q}{q}}(\R^n, d\sigma)$ 
with $f \geq 0$. Observe that
\[
\W_{\alpha, p}(f d\sigma) \leq \left( M_{\sigma}f \right)^{\frac{1}{p-1}} \W_{\alpha, p}\sigma 
\]
where the centered maximal operator $M_{\sigma}$ is defined by
\[
M_{\sigma}f (x) := \sup_{r>0} \frac{1}{\sigma(B(x,r))} \int_{B(x,r)} |f| \, d\sigma, \quad x \in \R^{n}.
\]
By H\"older's inequality with exponents  $\frac{p-1}{q}$ and $\frac{p-1}{p-1-q}$, and the 
boundedness of the maximal operator $M_{\sigma}: L^{s}(\R^{n}, d\sigma) \rightarrow L^{s}(\R^{n}, d\sigma) $ with $s=\frac{\gamma +q }{q}$, we obtain
\[
\begin{split}
& \int_{\R^n} \left(\W_{\alpha, p}(f d\sigma) \right)^{\gamma+q}\,d\sigma
 \leq \int_{\R^n} \left(M_{\sigma}f \right)^{\frac{\gamma+q}{p-1}} 
\left(\W_{\alpha, p}\sigma \right)^{\gamma+q}\,d\sigma \\
& \leq \left(  \int_{\R^n} \left(M_{\sigma}f \right)^{\frac{\gamma+q}{q}}\,d\sigma \right)^{\frac{q}{p-1}}
\left( \int_{\R^n} \left( \W_{\alpha, p}\sigma \right)^{\frac{(\gamma+q) (p-1)}{p-1-q}}\,d\sigma \right)^{\frac{p-1-q}{p-1}} \\
& \leq c \left( \int_{\R^n} f^{\frac{\gamma+q}{q}}\,d\sigma \right)^{\frac{q}{p-1}}
 \left( \int_{\R^n} \left(\W_{\alpha, p}\sigma \right)^{\frac{(\gamma+q)(p-1)}{p-1-q}}\,d\sigma\right)^{\frac{p-1-q}{p-1}}.
\end{split}
\]
Thus, by assumption \eqref{cond:sigma-gamma}, this yields \eqref{winq:wolff-sigma-sigma}.
\end{proof}

We now prove Theorem \ref{thm:wolff-existence}.

\begin{proof}[Proof of Theorem \ref{thm:wolff-existence}] To prove the sufficiency part, we construct a sequence of functions $\lbrace  u_j \rbrace_{j=0}^{\infty}$ by setting
\[
u_0 := c_{0}\left({\W}_{\alpha, p}\sigma\right)^{\frac{p-1}{p-1-q}} 
\quad \text{and} \quad 
u_{j+1} := {\W}_{\alpha, p} (u^{q}_{j} d\sigma), \quad \;\; j=0,1,2,\dots
\]
where $c_{0}$ is a small positive constant to be determined later.
Observe that $u_{0}>0$ since $\sigma \not\equiv 0$. Moreover, 
\[
u_{1} = {\W}_{\alpha, p} (u^{q}_{0} d\sigma)
= c_{0}^{\frac{q}{p-1}} {\W}_{\alpha, p} ( ({\W}_{\alpha, p}\sigma)^{\frac{q(p-1)}{p-1-q}} d\sigma)
\geq c_{0}^{\frac{q}{p-1}}C ({\W}_{\alpha, p}\sigma)^{\frac{p-1}{p-1-q}}
\]
where $C$ is the positive constant in inequality \eqref{iterated1-wolff} 
with $t=\frac{p-1}{p-1-q}$.
Choosing $c_{0}$ small enough so that $c_{0}^{\frac{q}{p-1}}C \geq c_{0}$, 
we obtain  $u_{0} \leq u_{1}$. 
By induction, we see that $\lbrace  u_j\rbrace_{j=0}^{\infty}$ is a nondecreasing sequence of positive functions. 

Next, we show that each $u_j \in  L^{\gamma+q}(\mathbb{R}^n, d\sigma)$. Note that assumption \eqref{cond:sigma-gamma} yields 
\[
u_0 = c_{0}\left({\W}_{\alpha, p}\sigma\right)^{\frac{p-1}{p-1-q}} \in L^{\gamma+q}(\mathbb{R}^n, d\sigma). 
\]

By Lemma \ref{lem:wolff},  weighted norm inequality \eqref{winq:wolff-sigma-sigma} holds.
Suppose that $u_0, \dots, u_j \in L^{\gamma +q}(\mathbb{R}^n, d\sigma)$ for some $j \in \N$.
Applying \eqref{winq:wolff-sigma-sigma} with $f:=u^{q}_{j} \in 
L^{\frac{\gamma+q}{q}}(\mathbb{R}^n, d\sigma)$, we deduce 
\begin{equation}\label{est2_thm_existence}
\begin{split}
 \Vert u_{j+1} \Vert_{L^{\gamma+q}(\mathbb{R}^n,\,d\sigma)}&  = \big\| \W_{\alpha, p} (u^{q}_{j} d\sigma) \big\|_{L^{\gamma+q}(\mathbb{R}^n,\,d\sigma)} \\
&\leq c \Vert u_{j}\Vert^{\frac{q}{p-1}}_{L^{\gamma+q}(\mathbb{R}^n,\,d\sigma)} \\
&\leq c \Vert u_{j+1}\Vert^{\frac{q}{p-1}}_{L^{\gamma+q}(\mathbb{R}^n,\,d\sigma)}.
\end{split}
\end{equation}
Hence, by induction, 
\[
\Vert u_{j+1} \Vert_{L^{\gamma+q}(\mathbb{R}^n,\,d\sigma)} 
\leq c^{\frac{p-1}{p-1-q}} < +\infty, \quad j=0, 1, \ldots .
\]
Finally, applying the Monotone Convergence Theorem to $\lbrace u_{j} \rbrace_{j=0}^{\infty}$, we see that the pointwise limit $u=\lim_{j \rightarrow \infty} u_j$ exists so that $u>0$, 
$u \in L^{\gamma+q}(\mathbb{R}^n, d\sigma)$, and $u$ satisfies equation \eqref{eq:int-wolff}.

The necessity part follows immediately from the global pointwise lower bound \eqref{inq:lowerbound} of supersolutions to \eqref{eq:int-wolff}.
\end{proof}

Let us consider the following weighted norm inequality with Lebesgue measure on the left-hand side:
\begin{equation}\label{winq:wolff-dx-sigma} 
\Vert \W_{\alpha, p}(f d\sigma)\Vert_{L^r(\R^{n})} \leq C \,  \Vert f \Vert^{\frac{1}{p-1}}_{L^{s} (\R^{n},\, d\sigma)}, \quad \forall f \in L^{s} (\R^{n}, d \sigma), 
\end{equation} 
where $r>p-1$, $s>1$, and $0<\alpha  p< n$. 

It is well-known (see \cite{HJ}, \cite{JPW}) that 
\begin{equation}\label{mw} 
\Vert \W_{\alpha, p}(f d\sigma)\Vert_{L^r(\R^{n})}  \approx  \Vert \I_{\alpha p}(f d\sigma)\Vert^{\frac{1}{p-1}}_{L^{\frac{r}{p-1}}(\R^{n})}, 
\end{equation}
with the constants of equivalence that depend on $\alpha, p, q, r, n$, 
but not on $f$ and $\sigma$.

Hence, for $r>p-1$ and $s>1$, 
 \eqref{winq:wolff-dx-sigma} is   equivalent by duality to
\begin{equation}\label{winq:wolff-dx-sigma-dual} 
\Vert  \I_{\alpha p} (g dx)\Vert_{L^{s'}(\R^{n},\,d\sigma)} 
\leq c \, C^{p-1}\, \Vert g \Vert_{L^{\frac{r}{r-p+1}} (\R^{n})}, \quad \forall g \in L^{\frac{r}{r-p+1}} (\R^{n}), 
\end{equation} 
where $(\frac{r}{p-1})'=\frac{r}{r-p+1}$, $s'=\frac{s}{s-1}$ are the dual exponents, and $c=c(\alpha, p, r, s, n)$ 
is a positive constant. 

The next lemma gives a sufficient condition for the validity of \eqref{winq:wolff-dx-sigma}.

\begin{Lem}\label{lem1-wolff}  
Let $1< p< \infty$, $0<\alpha< \frac{n}{p}$, $0<q<p-1$, and let  $\sigma \in \M^{+}(\R^{n})$. Suppose that $\frac{n(p-1)}{n-\alpha p}<r<\infty$
and 
\begin{equation}\label{cond:dx-wolff} 
\W_{\alpha, p} \sigma \in L^{\frac{r(p-1)}{p-1-q}}(\R^n).
\end{equation} 
Then \eqref{winq:wolff-dx-sigma} is valid with $s= \frac{r(n-\alpha p)- (p-1) n}{nq}+1$.
Moreover, there exists a positive constant $c=c(\alpha, p, q, r, n)$ such that
\begin{equation}\label{winq:dx-wolff-2} 
\Vert \W_{\alpha, p}(f d\sigma)\Vert_{L^r(\R^{n})} 
\leq c  \Vert \W_{\alpha, p}\sigma \Vert^{\frac{1}{s'}}_{L^{\frac{r(p-1)}{p-1-q}} (\R^{n})}  
\Vert f \Vert^{\frac{1}{p-1}}_{L^{s} (\R^{n}, d \sigma)}, 
\end{equation} 
for all $ f \in L^{s} (\R^{n}, d \sigma)$.
\end{Lem}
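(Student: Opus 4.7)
My plan is to combine a Hedberg-type pointwise Hölder estimate with an application of Hölder's inequality in $L^{r}(dx)$, using the Muckenhoupt--Wheeden equivalence \eqref{mw} as the bridge to a critical-exponent bound.

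\textit{Step 1 (pointwise bound).} First I would establish the pointwise estimate
\[
\W_{\alpha, p}(f\, d\sigma)(x) \leq [\W_{\alpha, p}(f^{s}\, d\sigma)(x)]^{1/s}\,[\W_{\alpha, p}\sigma(x)]^{1/s'}.
\]
This follows from two applications of Hölder's inequality with exponents $s, s'$: the first at the level of ball averages, $(f d\sigma)(B(x,r)) \leq [(f^{s} d\sigma)(B(x,r))]^{1/s}\, \sigma(B(x,r))^{1/s'}$; the second, after dividing by $r^{\,n-\alpha p}$ and raising to the power $1/(p-1)$, applied to the $dr/r$-integral defining $\W_{\alpha,p}$.

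\textit{Step 2 (Hölder in $L^{r}(dx)$).} Taking the $L^{r}(dx)$-norm of both sides and applying Hölder's inequality in Lebesgue spaces with a conjugate pair $(p_{1}, p_{2})$, $\tfrac{1}{p_{1}}+\tfrac{1}{p_{2}}=1$, chosen so that the factor $[\W_{\alpha,p}\sigma]^{r/s'}$ sits in $L^{p_{2}}(dx)$ with $p_{2}\cdot\tfrac{r}{s'} = \tfrac{r(p-1)}{p-1-q}$, thereby matching the hypothesis \eqref{cond:dx-wolff}, a direct calculation using the explicit value $s = \frac{r(n-\alpha p) - n(p-1)}{nq} + 1$ shows that the remaining factor equals $\|\W_{\alpha, p}(f^{s} d\sigma)\|_{L^{n(p-1)/(n-\alpha p)}(dx)}^{r/s}$.

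\textit{Step 3 (critical-exponent estimate).} The closing step is then to bound
\[
\|\W_{\alpha, p}(f^{s} d\sigma)\|_{L^{n(p-1)/(n-\alpha p)}(dx)} \leq C\, \|f\|_{L^{s}(\R^{n}, d\sigma)}^{s/(p-1)},
\]
which via \eqref{mw} is equivalent to the critical Hardy--Littlewood--Sobolev bound $\|\I_{\alpha p}(f^{s} d\sigma)\|_{L^{n/(n-\alpha p)}(dx)} \leq C\|f^{s} d\sigma\|$ for the finite measure $f^{s} d\sigma$.

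\textbf{Main obstacle.} I expect Step 3 to be the chief difficulty: at the critical HLS exponent, the strong-type bound for $\I_{\alpha p}$ applied to an arbitrary finite measure fails in general (only the weak-type estimate $\I_{\alpha p}: \mathcal{M}^{+} \to L^{n/(n-\alpha p), \infty}(dx)$ is available). My strategy would be to exploit the strict inequality $r > \tfrac{n(p-1)}{n-\alpha p}$, which provides room away from the endpoint, either by a Marcinkiewicz-type interpolation between two strong-type estimates off the critical exponent, or, equivalently, by a refined Hölder decomposition in Step 2 (splitting the integrand into three rather than two factors) so that the intermediate exponent for $\|\W_{\alpha,p}(f^{s} d\sigma)\|_{L^{\cdot}(dx)}$ lies strictly above $\tfrac{n(p-1)}{n-\alpha p}$; standard strong-type estimates for Wolff potentials of finite measures then close the bound.
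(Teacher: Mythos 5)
Your Steps 1 and 2 are correct, and the arithmetic in Step 2 checks out: with $s = \frac{r(n-\alpha p)-(p-1)n}{nq}+1$, the two-factor H\"older split really does force the leftover factor to sit in $L^{n(p-1)/(n-\alpha p)}(dx)$. But the obstacle you flag at Step 3 is fatal to this primal-side approach, and neither remedy you sketch closes it. The claimed bound
\[
\|\W_{\alpha,p}(f^s\,d\sigma)\|_{L^{n(p-1)/(n-\alpha p)}(\R^n)} \le C\,\|f^s\,d\sigma\|^{1/(p-1)}
\]
is genuinely false for arbitrary finite measures: if $f^s\,d\sigma$ concentrates to a point mass, then $\W_{\alpha,p}(f^s\,d\sigma)(x) \approx |x-x_0|^{-(n-\alpha p)/(p-1)}$ near the singularity, which lies in weak-$L^{n(p-1)/(n-\alpha p)}(\R^n)$ but not in the strong space. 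Marcinkiewicz interpolation cannot upgrade a weak endpoint bound to a strong one, and a three-factor split does not escape the endpoint either: the slack $r>\frac{n(p-1)}{n-\alpha p}$ is already consumed making $s>1$, and the two-factor split lands exactly at critical regardless of how large $r$ is, while pushing the exponent for $\W_{\alpha,p}(f^s\,d\sigma)$ strictly above critical is worse still, since $\W_{\alpha,p}$ of a finite measure need not belong to any $L^t(dx)$ with $t\geq \frac{n(p-1)}{n-\alpha p}$.

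The paper sidesteps this completely by dualizing first. Using \eqref{mw}, inequality \eqref{winq:dx-wolff-2} is rewritten in terms of $\I_{\alpha p}(f\,d\sigma)$, whose adjoint is $g\mapsto \I_{\alpha p}(g\,dx)$, so the target becomes
\[
\|\I_{\alpha p}(g\,dx)\|_{L^{s'}(\R^n,\,d\sigma)} \leq c\,\|\W_{\alpha,p}\sigma\|^{(p-1)/s'}_{L^{r(p-1)/(p-1-q)}(\R^n)}\,\|g\|_{L^{r/(r-p+1)}(\R^n)}
\]
for $g\geq 0$. Here $\I_{\alpha p}$ acts on a function $g$ lying in $L^{r/(r-p+1)}(\R^n)$ with $\frac{r}{r-p+1}>1$, so Sobolev's inequality is a legitimate strong-type bound and no endpoint arises. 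The paper then applies the iterated inequality \eqref{iterated1-wolff} (the $p=2$, $t=s'$ case) to convert $\int(\I_{\alpha p}(g\,dx))^{s'}\,d\sigma$ into $\int(\I_{\alpha p}(g\,dx))^{s'-1}(\I_{\alpha p}\sigma)\,g\,dx$ via Fubini, and finishes with a three-exponent H\"older ($\frac{r}{q}$, $\frac{r}{p-1-q}$, $\frac{r}{r-p+1}$) and Sobolev. To repair your argument, replace the direct pointwise factorization of $\W_{\alpha,p}(f\,d\sigma)$ by this dualization-plus-iteration scheme.
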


\begin{proof} 
As was shown above,  \eqref{winq:dx-wolff-2} is equivalent to 
\begin{equation}\label{dual-2} 
\Vert \I_{\alpha p}(g\,dx)\Vert_{L^{s'}(\R^{n},\,d\sigma)} 
\leq c  \Vert \W_{\alpha, p}\sigma \Vert^{\frac{p-1}{s'}}_{L^{\frac{r(p-1)}{p-1-q}} (\R^{n})}
\Vert g\Vert_{L^{\frac{r}{r-p+1}} (\R^{n})}, 
\end{equation} 
for all $g \in L^{\frac{r}{r-p+1}} (\R^n)$, where
without loss of generality we may assume that $g \ge 0$.

To prove the preceding inequality, we 
apply the iterated inequality \eqref{iterated1-wolff} in the special case $p=2$, to 
the Riesz potential  $\I_{\alpha p} (g \, d x)$, 
and $t=s'$, so that 
\[
\Big( \I_{\alpha p} (g \, d x) \Big)^{s'} \le c \, \I_{\alpha p} \left [ \Big( \I_{\alpha p} (g \, d x) \Big)^{s'-1} \, g\, dx \right]. 
\]
Hence, by the preceding inequality and Fubini's theorem, 
\[
\begin{split}
\int_{\R^{n}} \Big( \I_{\alpha p} (g \, d x) \Big)^{s'}\,d\sigma 
& \leq c \int_{\R^{n}}  \I_{\alpha p} \Big[ \Big( \I_{\alpha p} (g \, d x) \Big)^{s'-1} \, g\, dx 
  \Big] d \sigma 
  \\ 
&= c \int_{\R^{n}} \Big(\I_{\alpha p} (g \, d x) \Big)^{s'-1} (\I_{\alpha p} \sigma) \, g\,dx.  
\end{split}
\]
By H\"older's inequality with the three exponents  
$\frac{r}{q}$, 
$\frac{r}{p-1-q}>1$ and 
$\frac{r}{r-p+1}$, we estimate
\[
\begin{split}
 \int_{\R^{n}} \Big(\I_{\alpha p} (g \, d x) \Big)^{s'-1} (\I_{\alpha p} \sigma) \, g\,dx
& \le \big\| \I_{\alpha p}(g dx) \big\|^{s'-1}_{L^{\frac{r(s'-1)}{q}}(\R^n)} 
\\ 
& \times\Vert \I_{\alpha p} \sigma\Vert_{L^\frac{r}{p-1-q}(\R^n)}  \Vert g \Vert_{L^\frac{r}{r-p+1}(\R^n)}. 
\end{split}
\]
We observe that
$\frac{r(s'-1)}{q} = \frac{rn}{r(n-\alpha p)- (p-1) n}>1$ and
\[
\tfrac{q}{r(s'-1)} = \tfrac{r(n-\alpha p) -(p-1)n}{rn} = \tfrac{r-p+1}{r} - \tfrac{\alpha p }{n}.
\] 
Hence, by 
 Sobolev's inequality for the Riesz potential  $\I_{\alpha p}$,  
\[
\big\| \I_{\alpha p}(g dx) \big\|_{L^{\frac{r(s'-1)}{q}}(\R^n)}
\leq c \big\| g \big\|_{L^{\frac{r}{r-p+1}}(\R^n)}.
\]
Combining the above estimates proves 
\begin{equation*}\label{inq:weight-wolff-dual-2} 
\Vert \I_{\alpha p}(g\,dx)\Vert^{{s'}}_{L^{s'}(\R^{n},\,d\sigma)} 
\leq c  \Vert \I_{\alpha p}\sigma \Vert_{L^{\frac{r}{p-1-q}} (\R^{n})}
\Vert g\Vert^{s'}_{L^{\frac{r}{r-p+1}} (\R^{n})}.  
\end{equation*} 
Notice that  by \eqref{mw} with $f\equiv 1$ and $\frac{r(p-1)}{p-1-q}$ in place of $r$, we have 
\begin{equation*}
 \Vert \I_{\alpha p}\sigma \Vert_{L^{\frac{r}{p-1-q}} (\R^{n})} 
 \approx \Vert \W_{\alpha, p}\sigma \Vert^{p-1}_{L^{\frac{r(p-1)}{p-1-q}} (\R^{n})}.  
 \end{equation*}
This completes the proof of  \eqref{dual-2}, and hence 
\eqref{winq:dx-wolff-2}.
\end{proof}

The following lemma provides a crucial link between conditions 
\eqref{cond:sigma-gamma} and \eqref{cond:dx-wolff}. 
In particular, it shows that \eqref{cond:sigma-gamma} yields both weighted norm inequalities \eqref{winq:wolff-sigma-sigma} and \eqref{winq:wolff-dx-sigma} 
for suitable $r$ and $s$, in light of Lemma \ref{lem:wolff} and Lemma \ref{lem1-wolff}.

\begin{Lem}\label{lemma-link-p}  
Let $1< p< \infty$, $0<\alpha< \frac{n}{p}$, $0<q<p-1$, and let  $\sigma \in \M^{+}(\R^{n})$. 
If $\frac{n(p-1)}{n-\alpha p}<r<\infty$, then  
\eqref{cond:sigma-gamma} with $\gamma = \frac{r(n-\alpha p)-(p-1)n}{n}$ 
implies \eqref{cond:dx-wolff}.
\end{Lem}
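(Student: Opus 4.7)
The plan is to combine the pointwise iterated Wolff inequality \eqref{iterated1-wolff} with the Muckenhoupt--Wheeden equivalence \eqref{mw}, transporting the $d\sigma$-integrability hypothesis to a $dx$-integrability conclusion. Set $\tau := \frac{(\gamma+q)(p-1)}{p-1-q}$ and $\rho := \frac{r(p-1)}{p-1-q}$, so that \eqref{cond:sigma-gamma} reads $\int(\W_{\alpha,p}\sigma)^\tau\,d\sigma<\infty$ and \eqref{cond:dx-wolff} reads $\int(\W_{\alpha,p}\sigma)^\rho\,dx<\infty$. The first thing I would record is the Sobolev-type algebraic identity
\[
\rho \;=\; \frac{n(\tau+p-1)}{n-\alpha p},
\]
which follows by a direct manipulation from $\gamma=\frac{r(n-\alpha p)-n(p-1)}{n}$; this matching of exponents is what allows the bridge at all. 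Note also that $\rho > p-1$ and $\tau \ge 0$, thanks to $r>\frac{n(p-1)}{n-\alpha p}$.

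Next, I would apply Theorem~\ref{Thm:iterated-wolff}(i) with $t=\frac{\tau+p-1}{p-1}\ge 1$; the choice is arranged so that $(t-1)(p-1)=\tau$, giving the pointwise bound
\[
(\W_{\alpha,p}\sigma)^{(\tau+p-1)/(p-1)}(x)\;\le\;c\,\W_{\alpha,p}(h\,d\sigma)(x),\qquad h:=(\W_{\alpha,p}\sigma)^\tau,
\]
with $\int h\,d\sigma<\infty$ by hypothesis. Raising to the power $R:=\frac{n(p-1)}{n-\alpha p}$ and invoking the identity $\rho=R\cdot\frac{\tau+p-1}{p-1}$ produces
\[
(\W_{\alpha,p}\sigma)^{\rho}(x)\;\le\;c\,[\W_{\alpha,p}(h\,d\sigma)(x)]^R.
\]
Integrating over $\R^n$ against Lebesgue measure and applying \eqref{mw} on the right (legitimate since $R>p-1$) reduces the matter to verifying
\[
\|\I_{\alpha p}(h\,d\sigma)\|_{L^{n/(n-\alpha p)}(\R^n)}<\infty.
\]

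The main obstacle is that $n/(n-\alpha p)$ is precisely the Sobolev endpoint exponent for $\I_{\alpha p}$ acting on a general finite measure, where only a weak-type bound is available. To pass to a strong-type estimate I would exploit the specific structure of the density $h=(\W_{\alpha,p}\sigma)^{\tau}$: either perturb the choice of $t$ to a slightly larger value, yielding an $R$ strictly below the endpoint at the cost of a H\"older bound on $(\W_{\alpha,p}\sigma)^{\tau+\varepsilon(p-1)}$ in $L^1(d\sigma)$ and a limit $\varepsilon\to 0$, or invoke a Maz'ya-type trace inequality for $\I_{\alpha p}$ characterized by the Wolff-potential condition \eqref{cond:sigma-gamma}. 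This endpoint handling is the most technical part; the preceding steps are algebraic identities and direct applications of \eqref{iterated1-wolff} and \eqref{mw}.
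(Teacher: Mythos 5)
The algebraic reduction in the first half of your proposal is sound: the identity $\rho=\frac{n(\tau+p-1)}{n-\alpha p}$ is correct, and applying \eqref{iterated1-wolff} with $t=\frac{\tau+p-1}{p-1}$ followed by \eqref{mw} does reduce the claim to bounding $\|\I_{\alpha p}(h\,d\sigma)\|_{L^{n/(n-\alpha p)}(\R^n)}$ with $h\,d\sigma$ a finite measure. However, you have correctly diagnosed that this is the endpoint of the Sobolev scale, and the workarounds you sketch do not actually close the gap. Perturbing $t$ upward requires $\int(\W_{\alpha,p}\sigma)^{\tau+\varepsilon(p-1)}\,d\sigma<\infty$, which is not assumed; perturbing downward lands you at a target exponent $b<\frac{n}{n-\alpha p}$, which is \emph{below} the Sobolev endpoint, where $\I_{\alpha p}$ of a finite positive measure has no global integrability at all (the tail of $|x|^{(\alpha p-n)b}$ is not integrable). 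The vague appeal to a ``Maz'ya-type trace inequality'' is precisely the place where substantive new content is needed, and you have not supplied it.

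The paper avoids the endpoint entirely by a different mechanism. It first truncates to $\sigma_k:=\chi_{\Omega_k}\sigma$ with $\Omega_k=\{\W_{\alpha,p}\sigma\le k\}\cap B(0,k)$, so that the explicit decay estimate $\W_{\alpha,p}\sigma_k(x)\le C(1+|x|)^{-(n-\alpha p)/(p-1)}$ makes $\int(\W_{\alpha,p}\sigma_k)^\rho\,dx$ a priori finite. Then, instead of a one-weight Sobolev inequality for $\I_{\alpha p}$ against a measure, it invokes the two-weight inequality \eqref{winq:dx-wolff-2} from Lemma~\ref{lem1-wolff} with $f=(\W_{\alpha,p}\sigma_k)^{q(p-1)/(p-1-q)}$; crucially, the right-hand side of that inequality carries the factor $\|\W_{\alpha,p}\sigma_k\|_{L^\rho(\R^n)}^{1/s'}$. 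Combined with the iterated inequality \eqref{iterated1-wolff} applied with $t=\frac{p-1}{p-1-q}$, this yields a self-referential bound in which $\int(\W_{\alpha,p}\sigma_k)^\rho\,dx$ appears on \emph{both} sides, with a strictly smaller power on the right; since the quantity is finite, one can divide through and then let $k\to\infty$. Your direct route has no analogue of this bootstrap and, as you note, stalls exactly at the endpoint. If you want to salvage the approach, the missing ingredient is not a perturbation or an unspecified trace inequality but precisely the two-weight estimate of Lemma~\ref{lem1-wolff} together with the truncation-and-absorption argument.
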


\begin{proof}
For  $k \in \N$, let $\sigma_{k}:= \chi_{\Omega_{k}}\sigma$, where $\chi_{\Omega_{k}}$ is the characteristic 
function of the set 
\[
\Omega_{k} =\{ x \in \R^{n}: \,\, \W_{\alpha, p}\sigma(x) \leq k
\}\cap B(0, k). 
\]
Then $\W_{\alpha, p}\sigma_k (x)\le k$ on  
the support of $\sigma_k$, and  by the weak maximum principle 
\eqref{wolff-max}, we have 
$\W_{\alpha, p}\sigma_k (x)\le  c(\alpha, p, n) \, k$ for all $x \in \R^n$. 

Moreover, if $|x|\ge 2 k$, then $\sigma_k(B(x, \rho))=0$ for $0<\rho<\frac{|x|}{2}$, and  
hence, for a positive constant $c=c(\alpha, p, n)$, 
\[
\W_{\alpha, p}\sigma_k (x)\le \int_{\frac{|x|}{2}}^\infty \left[\frac{\sigma_k(B(x, \rho))}
{\rho^{n-\alpha p}}
\right]^{\frac{1}{p-1}} \frac{d \rho}{\rho}\le  \, c \, 
\sigma(B(0, k))^{\frac{1}{p-1}} 
|x|^{-\frac{n-\alpha p}{p-1}}.
\]
Thus, $
\W_{\alpha, p}\sigma_k (x) \leq C (|x|+1)^{-\frac{n-\alpha p}{p-1}}$, for all $x \in \R^n$. 
Since $r>\frac{n(p-1)}{n-\alpha p}$ and $0<q<p-1$, it follows that 
\[
\int_{\R^n} (\W_{\alpha, p}\sigma_{k})^{\frac{r(p-1)}{p-1-q}} dx < + \infty.
\]

Applying the iterated inequality \eqref{iterated1-wolff} with $t=\frac{p-1}{p-1-q}>1$, along with the 
weighted norm inequality \eqref{winq:dx-wolff-2} with 
$f=(\W_{\alpha, p}\sigma_{k})^{\frac{q(p-1)}{p-1-q}}$, we deduce
\begin{align*} 
& \int_{\R^n} (\W_{\alpha, p}\sigma_{k}) ^{\frac{r(p-1)}{p-1-q}}\,dx 
\leq c \int_{\R^n} \left[ \W_{\alpha, p} (f d \sigma_{k}) \right]^r dx\\ 
&\leq  c \big\| \W_{\alpha, p}\sigma_{k} \big\|^{\frac{r}{s'}}_{L^{\frac{r(p-1)}{p-1-q}} (\R^n)} 
             \big\| f \big\|^{\frac{r}{p-1}}_{L^{s} (\R^n,\,d\sigma_{k})} \\
&\leq c \Big[ \int_{\R^n} (\W_{\alpha, p}\sigma_{k})^{\frac{r(p-1)}{p-1-q}}\,dx \Big]^{\frac{p-1-q}{(p-1)s'}}
\Big[ \int_{\R^n} (\mathbf{W}_{\alpha, p} \sigma)^{\frac{qs(p-1)}{p-1-q}} d \sigma\Big]^{\frac{r}{s(p-1)}}, 
\end{align*}
where $s= \frac{r(n-\alpha p)- (p-1) n}{nq}+1$. In other words, 
\[
\Big[ \int_{\R^n} (\W_{\alpha, p}\sigma_{k})^{\frac{r(p-1)}{p-1-q}}\,dx \Big]^{1-\frac{p-1-q}{s'(p-1)}}
\leq c \Big[ \int_{\R^{n}} (\W_{\alpha, p} \sigma)^{\frac{qs(p-1)}{p-1-q}} d \sigma\Big]^{\frac{r}{s(p-1)}}, 
\]
where the right-hand side integral is finite by assumption \eqref{cond:sigma-gamma},  since
$\frac{qs(p-1)}{p-1-q}=\frac{(\gamma+q)(p-1)}{p-1-q}$. Notice that $1-\frac{p-1-q}{(p-1)s'}>0$.
Passing to the limit as $k \rightarrow \infty$ in the preceding estimate yields \eqref{cond:dx-wolff}.
\end{proof}

We now prove Theorem \ref{thm:main-p-laplacian} and Theorem \ref{thm:main2}.

\begin{proof}[Proof of Theorem \ref{thm:main-p-laplacian}] It is well known 
\cite{HKM} that,  
for $p \ge n$,  there is no positive $p$-superharmonic functions on $\R^n$. Hence, 
\eqref{eq:p-laplacian}  has no nontrivial solutions. 

In the case $1<p<n$,  suppose \eqref{eq:p-laplacian}  has a nontrivial solution 
$u \in L^r(\R^n)$, $r>0$. Let $d \omega =u^q d\sigma$,  and 
pick $R>0$ so that $\omega(B(0, R))>0$. Then by Theorem \ref{pointwise_est_p-superharmonic}, we have, for $|x|> 2R$, 
\[
u(x) \ge C \,  \W_{1, p}\omega (x)\ge C \, \omega(B(0, R))^{\frac{1}{p-1}} \, |x|^{-\frac{n-p}{p-1}}.
\]
Hence,  $u \in L^r(\R^n)$ yields   
$r>\frac{n(p-1)}{n-p}$.  

Suppose now that $1<p<n$, $r>\frac{n(p-1)}{n-p}$, and 
\eqref{cond:dsigma-wolff} is valid. In view of Theorem \ref{thm:wolff-existence} with $\alpha=1$, there exists a positive solution 
$v \in L^{\gamma +q}(\Omega, d\sigma)$, where $\gamma = \frac{r(n-p)-n(p-1)}{n}>0$, to the integral equation
\begin{equation}\label{eq:int-wolff-1}
v = \W_{1,p}(v^{q}d\sigma) \quad \text{in} \;\; \R^n.
\end{equation}
Moreover, by Lemma \ref{lem1-wolff} and Lemma \ref{lemma-link-p}, the weighted
norm inequality \eqref{winq:wolff-dx-sigma} holds with $s=\frac{\gamma+q}{q}$.
Letting $f=v^{q} \in L^{\frac{\gamma + q}{q}}(\R^n)$ in \eqref{winq:wolff-dx-sigma} yields
\[
\begin{split}
\| v \|_{L^{r}(\R^n)} &= \big\| \W_{1, p}(v^{q}d\sigma) \big\|_{L^{r}(\R^n)}
\leq c \| v^q \|^{\frac{1}{p-1}}_{L^{\frac{\gamma + q}{q}}(\R^n, d \sigma)} \\
&= c \| v \|_{L^{\gamma +q}(\R^n, d \sigma)}^{\frac{q}{p-1}}
< +\infty.
\end{split}
\]
This shows that the solution $v \in L^{r}(\R^n)$ as well.

Using an iterative procedure  as in the proof of \cite[Theorem 1.1]{CV2}, 
based on  Theorem \ref{weak_cont_p-Laplacian} and Theorem \ref{pointwise_est_p-superharmonic},  
we construct a (minimal) positive $p$-superharmonic solution $u$
to \eqref{eq:p-laplacian} so that $u(x) \le C \, v(x)$. Consequently, 
$u \in L^{r}(\R^n)$.
\end{proof}

\begin{proof}[Proof of Theorem \ref{thm:main2}] 
Since a superharmonic solution $u$ to \eqref{eq:frac-laplacian} is understood in the sense that
\begin{equation}\label{eq:int-riesz}
u = \I_{2\alpha}(u^{q}d\sigma) \quad \text{in} \;\; \R^n,
\end{equation}
then by using an argument similar to that  in the proof of Theorem \ref{thm:main-p-laplacian} with 
$\I_{2\alpha} = \W_{\alpha, 2}$ in place of $\W_{1,p}$, we see that condition \eqref{cond:dsigma-riesz} yields the existence of a (minimal) positive solution $u \in L^{r}(\R^n)$
to \eqref{eq:frac-laplacian} for $r>\frac{n}{n- 2 \alpha}$. If $0<r \le \frac{n}{n- 2 \alpha}$, then there is no nontrivial solution $u \in L^r(\R^n)$, since 
$u(x)=\I_{2\alpha}(u^{q}d\sigma)(x)\ge C \, |x|^{2 \alpha-n}$ for $|x|$ large, as in   the 
$p$-Laplace case considered above. 
\end{proof}

The next proposition is used to deduce Corollaries \ref{cor:main1} and \ref{cor:main3}. 

\begin{Prop}\label{prop:wolff}
Let $1<p<n$, $0<\alpha<\frac{n}{p}$ and $\beta>0$.
If $\sigma \in L^{s}(\R^n)$ is a nonnegative function, where $s=\frac{n(\beta+p-1)}{n(p-1)+p\alpha\beta}$, then
\begin{equation}\label{cond:sigma-gen}
\W_{\alpha, p}\sigma \in L^{\beta}(\R^{n}, d\sigma).
\end{equation}
\end{Prop}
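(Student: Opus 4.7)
The plan is to reduce the problem to the boundedness of Riesz potentials on Lebesgue spaces and then apply H\"older's inequality to absorb the measure $d\sigma$.

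\textbf{Step 1 (Reduction to Havin-Maz'ya).} By the pointwise inequality \eqref{inq:HM}, it suffices to prove that
\[
\int_{\R^n} \bigl(\mathbf{V}_{\alpha, p}\sigma\bigr)^{\beta}\,\sigma\, dx < +\infty,
\]
where $\mathbf{V}_{\alpha, p}\sigma = \I_{\alpha}\bigl[(\I_{\alpha}\sigma)^{\frac{1}{p-1}} dx\bigr]$. This replaces the awkward Wolff potential by a composition of linear Riesz potentials, which is the natural object for Hardy-Littlewood-Sobolev (HLS) embeddings.

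\textbf{Step 2 (Apply H\"older).} I would apply H\"older's inequality with exponents $s$ and $s' = s/(s-1)$ to separate $\sigma$ from the potential:
\[
\int_{\R^n} \bigl(\mathbf{V}_{\alpha, p}\sigma\bigr)^{\beta} \sigma\, dx
\le \|\sigma\|_{L^{s}(\R^n)} \,\bigl\| \mathbf{V}_{\alpha, p}\sigma \bigr\|_{L^{\beta s'}(\R^n)}^{\beta}.
\]
By hypothesis, the first factor is finite, so the task is to bound the second factor by a power of $\|\sigma\|_{L^{s}}$.

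\textbf{Step 3 (Apply HLS twice).} The HLS (Sobolev) inequality for $\I_{\alpha}$ gives
\[
\bigl\| \I_{\alpha}\sigma\bigr\|_{L^{s^{*}}(\R^n)} \le c\,\|\sigma\|_{L^{s}(\R^n)}, \qquad \tfrac{1}{s^{*}} = \tfrac{1}{s} - \tfrac{\alpha}{n},
\]
hence $\bigl\|(\I_{\alpha}\sigma)^{\frac{1}{p-1}}\bigr\|_{L^{s^{*}(p-1)}(\R^n)} \le c \,\|\sigma\|_{L^s(\R^n)}^{\frac{1}{p-1}}$. A second application of HLS to $\I_{\alpha}$, applied to the function $(\I_{\alpha}\sigma)^{\frac{1}{p-1}}$, yields
\[
\bigl\| \mathbf{V}_{\alpha, p}\sigma\bigr\|_{L^{q}(\R^n)} \le c\,\bigl\|(\I_{\alpha}\sigma)^{\frac{1}{p-1}}\bigr\|_{L^{s^{*}(p-1)}(\R^n)} \le c\,\|\sigma\|_{L^{s}(\R^n)}^{\frac{1}{p-1}},
\]
where $\frac{1}{q} = \frac{1}{(p-1)s^{*}} - \frac{\alpha}{n}$.

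\textbf{Step 4 (Match exponents).} It remains to check that the choice $s = \frac{n(\beta+p-1)}{n(p-1)+p\alpha\beta}$ makes $q = \beta s'$, i.e., $\frac{s-1}{\beta s} = \frac{1}{p-1}\left(\frac{1}{s}-\frac{\alpha p}{n}\right)$, which is routine algebra. Combining Steps 2-3 then gives
\[
\int_{\R^n} (\W_{\alpha, p}\sigma)^{\beta}\, d\sigma \le c\, \|\sigma\|_{L^{s}(\R^n)}^{\beta/(p-1) + 1} < +\infty.
\]

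The main obstacle is purely bookkeeping: one must verify that $1 < s < n/\alpha$ and $1 < s^{*}(p-1) < \infty$ so both applications of HLS are legitimate. The first is immediate from $\alpha < n/p$ together with $\beta > 0$; the second reduces to the inequality $s > \frac{n}{n(p-1)+\alpha}$, which in turn follows from the explicit formula for $s$ and the range of $\alpha$ and $p$. No genuinely hard analysis is required beyond these verifications.
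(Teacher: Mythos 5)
Your Step 4 contains a genuine gap. You claim that the condition $s^{*}(p-1)>1$ needed for the second application of HLS ``reduces to the inequality $s>\frac{n}{n(p-1)+\alpha}$, which in turn follows from the explicit formula for $s$.'' The reduction itself is correct, but the conclusion is not: with $s=\frac{n(\beta+p-1)}{n(p-1)+p\alpha\beta}$, a short computation shows that $s>\frac{n}{n(p-1)+\alpha}$ is equivalent to $\beta(n-\alpha)+n(p-2)+\alpha>0$. This always holds for $p\ge 2$, but it fails for $1<p<2$ and small $\beta$. For instance, take $n=3$, $p=3/2$, $\alpha=1$ (so $\alpha<n/p$), $\beta=1/10$: then $s\approx1.09$, $s^{*}\approx1.71$, and $s^{*}(p-1)\approx0.86<1$. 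At that point the second HLS simply does not apply; for a source exponent below $1$ the Riesz potential $\I_{\alpha}$ need not even be finite a.e., let alone bounded into $L^{q}$. Using the Havin--Maz'ya pointwise majorant $\mathbf{V}_{\alpha,p}\sigma=\I_{\alpha}\bigl[(\I_{\alpha}\sigma)^{1/(p-1)}\bigr]$ and iterating HLS is therefore not viable in the full range of parameters.

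The paper's proof sidesteps this by invoking the $L^{r}$ norm equivalence \eqref{mw} between $\W_{\alpha,p}\sigma$ and $\I_{\alpha p}\sigma$ from \cite{HJ,JPW}: after the same H\"older step, one bounds $\|\W_{\alpha,p}\sigma\|_{L^{\beta s'}}$ by $\|\I_{\alpha p}\sigma\|_{L^{\beta s'/(p-1)}}^{1/(p-1)}$ and applies HLS \emph{once}, for $\I_{\alpha p}\colon L^{s}\to L^{\beta s'/(p-1)}$, which only requires $1<s<n/(\alpha p)$, and this condition does hold. The equivalence \eqref{mw} is precisely what compensates for the loss of the intermediate HLS step when $1<p<2$; it is a nontrivial input (relying on the special structure of $(\I_{\alpha}\sigma)^{1/(p-1)}$ as a power of a potential of a measure), not something recoverable from two bare applications of HLS. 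Your argument is fine for $p\ge 2$, but to cover $1<p<2$ you need to replace Steps 1 and 3 by the norm equivalence \eqref{mw} together with a single Sobolev inequality for $\I_{\alpha p}$, exactly as in Lemma \ref{lem1-wolff}.
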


\begin{proof} 
Since $s>1$, by H\"{o}lder's inequality, we have 
\[
\int_{\R^n} (\W_{\alpha, p}\sigma)^{\beta}\,d\sigma
\leq \big\| \W_{\alpha, p}\sigma \big\|_{L^{\beta s'}(\R^n)}^{\beta}
\| \sigma \|_{L^{s}(\R^n)}.
\]
Notice that
$
\frac{p-1}{\beta s'} = \frac{1}{s} - \frac{\alpha p}{n}>0.
$
Estimating again   the norm of $\W_{\alpha, p} \sigma$ in terms of the norm of 
the Riesz potential $\I_{\alpha p} \sigma$ by \eqref{mw}, along with Sobolev's inequality 
for  $\I_{\alpha p} \sigma$, we  obtain 
\[
\big\| \W_{\alpha, p}\sigma \big\|_{L^{\beta s'}(\R^n)}
\leq c \big\| \I_{\alpha p}\sigma \big\|_{L^{\frac{\beta s'}{p-1}}(\R^n)}^{\frac{1}{p-1}}
\leq c \| \sigma \|_{L^{s}(\R^n)}^{\frac{1}{p-1}}.
\]
Combining the preceding estimates yields \eqref{cond:sigma-gen}.
\end{proof}

Letting  $\beta=\frac{(\gamma+q)(p-1)}{p-1-q}$, where  $0<q<p-1$, and 
$\gamma=\frac{r(n-\alpha p)-n(p-1)}{n}$, where $r>\frac{n(p-1)}{n-\alpha p}$, 
we obtain $\beta>0$ and $s=\frac{nr}{n(p-1-q)+ \alpha p r}>1$ in Proposition \ref{prop:wolff}. Notice that condition \eqref{cond:sigma-gen} coincides with \eqref{cond:sigma-gamma}. 
Consequently, $\sigma \in L^{s}(\R^n)$ is sufficient for  \eqref{cond:sigma-gamma} 
by Proposition \ref{prop:wolff}. 

In the case $\alpha=1$, we obtain that 
\eqref{cond:sigma-suff} implies \eqref{cond:dsigma-wolff}.
Thus,  Corollary \ref{cor:main1} follows from Theorem \ref{thm:main-p-laplacian}.

Similarly, applying Proposition \ref{prop:wolff} with $p=2$, we see that 
\eqref{cond:sigma-suff2} implies \eqref{cond:dsigma-riesz}. 
Hence, Corollary \ref{cor:main3} holds in view of Theorem \ref{thm:main2}.

\section{Solutions to problem \eqref{eq:laplacian}}
As mentioned above, our method of proof of Theorem \ref{thm:main3} 
is based on both the  
weighted norm inequality \eqref{weighted_norm_ineq}, and a similar inequality  
with Lebesgue measure on the left-hand side:
\begin{equation}\label{inq:weighted} 
\Vert \mathbf{G} (f d\sigma)\Vert_{L^r(\Omega)} \leq c \Vert f \Vert_{L^{s} (\Omega,\, d\sigma)}, \quad \forall f \in L^{s} (\Omega, d \sigma),
\end{equation} 
for $1<r,s<\infty$. Notice that by duality, \eqref{inq:weighted} is equivalent to
\begin{equation}\label{inq:weight-dual} 
\Vert \mathbf{G} (g dx)\Vert_{L^{s'}(\Omega,\,d\sigma)} 
\leq c \Vert g \Vert_{L^{r'} (\Omega)}, \quad \forall g \in L^{r'} (\Omega),
\end{equation} 
where $r'=\frac{r}{r-1}$ and $s'=\frac{s}{s-1}$.

We observe that such inequalities were characterized when $\g$ is
the Riesz potential on $\R^n$, in terms of Riesz capacities, by Maz'ya and Netrusov,
and in terms of nonlinear  potentials,  by Cascante, Ortega, and Verbitsky (see \cite{COV}, \cite{Maz}).
Our first lemma gives sufficient conditions for the validity of
 \eqref{inq:weighted}.

\begin{Lem}\label{lemma1}  
Let $0<q<1$ and $\sigma \in \M^{+}(\Omega)$. Let $G$ be a positive lower semicontinuous kernel on 
$\Omega \times \Omega$, which satisfies the WMP and \eqref{cond:Riesz0} for some 
$0<\alpha<\frac{n}{2}$. Suppose that $\frac{n}{n-2\alpha}<r<\infty$
and that condition \eqref{cond:dx-green} holds.
Then \eqref{inq:weighted} is valid with $s= \frac{r(n-2\alpha) - n(1-q)}{nq}$.
Moreover,
\begin{equation}\label{inq:weighted-2} 
\Vert \mathbf{G}(f d\sigma)\Vert_{L^r(\Omega)} 
\leq c  \Vert \mathbf{G} \sigma \Vert^{\frac{1}{s'}}_{L^{\frac{r}{1-q}} (\Omega)}  
\Vert f \Vert_{L^{s} (\Omega, d \sigma)}, \quad \forall f \in L^{s} (\Omega, d \sigma),
\end{equation} 
where $c$ is a positive constant independent of $f$ and $\sigma$.
\end{Lem}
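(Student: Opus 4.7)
The plan is to follow the structure of the proof of Lemma \ref{lem1-wolff}, transferring the argument from Wolff potentials to the Green operator $\g$. First, I would pass to the dual formulation: since $G$ is quasi-symmetric (the standing assumption of Section \ref{sect:kernel}), the desired inequality \eqref{inq:weighted-2} is equivalent by the usual duality $\int \g(f d\sigma) g\,dx = \int f\,\check{\g}(g dx)\,d\sigma$, up to constants, to showing
\[
\Vert \g(g dx)\Vert_{L^{s'}(\Omega, d\sigma)} \leq c\, \Vert \g\sigma \Vert_{L^{r/(1-q)}(\Omega)}^{1/s'} \Vert g\Vert_{L^{r'}(\Omega)}
\]
for every nonnegative $g \in L^{r'}(\Omega)$, where $r' = r/(r-1)$ and $s' = s/(s-1)$. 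A quick arithmetic check using $r > n/(n-2\alpha)$ shows $s-1 = \frac{r(n-2\alpha)-n}{nq} > 0$, so $s > 1$ and $s' < \infty$.

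The core estimate will come from the iterated bound of Theorem \ref{Thm:iterated}(i), applied with $t = s' \geq 1$ to the measure $\mu = g\,dx$:
\[
\bigl(\g(g dx)\bigr)^{s'}(x) \leq s'\, h^{s'-1}\, \g\!\left[\bigl(\g(g dx)\bigr)^{s'-1} g\,dx\right](x).
\]
Integrating both sides against $d\sigma$ and using Fubini (again invoking quasi-symmetry, so that $\int \g h\, d\sigma \approx \int h\, \g\sigma\, dx$), I obtain
\[
\int_\Omega \bigl(\g(g dx)\bigr)^{s'} d\sigma \leq C \int_\Omega \bigl(\g(g dx)\bigr)^{s'-1} (\g\sigma)\, g\, dx.
\]
Then I would apply H\"older's inequality on the right with the three exponents $\frac{r}{q}$, $\frac{r}{1-q}$, and $r'$, whose reciprocals sum to $1$, yielding
\[
\leq C\, \Vert \g(g dx)\Vert_{L^{r(s'-1)/q}(\Omega)}^{s'-1}\, \Vert \g\sigma\Vert_{L^{r/(1-q)}(\Omega)}\, \Vert g\Vert_{L^{r'}(\Omega)}.
\]

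The final ingredient is the Sobolev estimate for the Riesz potential. A direct calculation gives $r(s'-1)/q = \frac{rn}{r(n-2\alpha)-n}$, which is precisely the Sobolev conjugate of $r'$ under $\I_{2\alpha}$: $\frac{1}{r'} - \frac{2\alpha}{n} = \frac{r(n-2\alpha)-n}{rn}$. Using the pointwise domination $\g(g dx) \leq c\, \I_{2\alpha}g$ from hypothesis \eqref{cond:Riesz0}, Sobolev's inequality for $\I_{2\alpha}$ therefore yields $\Vert \g(g dx)\Vert_{L^{r(s'-1)/q}(\Omega)} \leq c\, \Vert g\Vert_{L^{r'}(\Omega)}$. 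Substituting, taking the $s'$-th root of the integrated inequality, and returning to \eqref{inq:weighted-2} by duality completes the proof. The main subtleties are bookkeeping rather than conceptual: verifying $s>1$, confirming that the three H\"older exponents assemble to exactly the Sobolev-conjugate level needed, and ensuring that the quasi-symmetry of $G$ is used legitimately at the Fubini/duality steps. The Riesz domination \eqref{cond:Riesz0} is invoked only once, to import the Sobolev embedding into the Green-operator setting.
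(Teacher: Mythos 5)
Your proof is correct and follows essentially the same route as the paper's: pass to the dual inequality, apply the iterated inequality \eqref{iterated1} with $t=s'$, use Fubini to bring out $\g\sigma$, then three-exponent H\"older with $\frac{r}{q},\frac{r}{1-q},r'$, and finish by importing Sobolev's inequality for $\I_{2\alpha}$ via the domination \eqref{cond:Riesz0}. The one small caveat is that quasi-symmetry of $G$ is not explicitly listed among the lemma's hypotheses; you invoke it as a ``standing assumption,'' while the paper simply asserts the duality and Fubini identity (which strictly requires symmetry, valid for the Green function of $-\Delta$, and acknowledged in Remark \ref{rem3} for the quasi-symmetric case) — so you and the authors rely on the same implicit hypothesis.
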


\begin{proof} 
Observe that the hypotheses ensure
\[
1<s<\infty, \quad \tfrac{(s'-1)r}{q}>1 \quad  \text{and} \quad  
\tfrac{1}{r'} - \tfrac{q}{(s'-1)r} = \tfrac{2\alpha}{n}.
\]
By duality, \eqref{inq:weighted-2}  is equivalent to 
\begin{equation}\label{inq:weight-dual-2} 
\Vert \mathbf{G} (g\,dx)\Vert_{L^{s'}(\Omega,\,d\sigma)} 
\leq c  \Vert \mathbf{G} \sigma \Vert^{\frac{1}{s'}}_{L^{\frac{r}{1-q}} (\Omega)}
\Vert g \Vert_{L^{r'} (\Omega)}, \quad \forall g \in L^{r'} (\Omega),
\end{equation} 
where $c$ is a positive constant independent of $g$  and $\sigma$. 
Without loss of generality, suppose $g \in L^{r'} (\Omega)$ with $g \ge 0$.
Applying iterated inequality \eqref{iterated1} 
with $t=s'$, followed by Fubini's theorem and H\"older's inequality with exponents $\frac{r}{1-q}$ and 
$\frac{r}{r-1+q}$, we deduce
\[
\begin{split}
 \int_\Omega \Big(\mathbf{G} (g \, d x) \Big)^{s'}\,d\sigma 
&  \leq c \int_\Omega \mathbf{G} \Big[ \Big(\mathbf{G} (g \, d x) \Big)^{s'-1} \, g dx 
  \Big] d \sigma \\ 
&= c \int_\Omega \Big(\mathbf{G} (g \, d x) \Big)^{s'-1} (\mathbf{G} \sigma) \, g dx.
\end{split}
\]
Using  H\"older's inequality with the exponents $\frac{r}{q}$, 
$\frac{r}{1-q}$ and $r'$, we estimate
\[
\begin{split}
\int_\Omega \Big(\mathbf{G} (g \, d x) \Big)^{s'-1} (\mathbf{G} \sigma) \, g dx
& \leq \Vert \mathbf{G} (g \, d x) \Vert^{s'-1}_{L^{\frac{(s'-1)r}{q}} (\Omega)} \\ & 
 \times \Vert \mathbf{G} \sigma \Vert_{L^\frac{r}{1-q}(\Omega)}  
\Vert g \Vert_{L^{r'}(\Omega)} .
\end{split}
\]
Denote by $\tilde{g}$ the zero extension of $g$ to $\R^n$. By assumption 
\eqref{cond:Riesz0} and Sobolev's inequality for Riesz potentials, we have
\[
\| \g(g\,dx) \|_{L^{\frac{(s'-1)r}{q}}(\Omega)}
\leq c \| \I_{2\alpha}\tilde{g}\|_{L^{\frac{(s'-1)r}{q}}(\R^n)} 
\leq  c \| \tilde{g}\|_{L^{r'}(\R^n)} 
= c \| g \|_{L^{r'}(\Omega)}.
\]
Combining the preceding inequalities yields \eqref{inq:weight-dual-2} as desired.
\end{proof}

The following lemma provides an important link between conditions \eqref{cond:dx-green} and \eqref{cond:dsigma-green}. 
In particular, it shows that \eqref{cond:dsigma-green} yields both weighted norm inequalities \eqref{weighted_norm_ineq} and \eqref{inq:weighted} for suitable $r$ and $s$, in light of Theorem \ref{thm_Ver17} and Lemma \ref{lemma1}.

\begin{Lem}\label{lemma2}  Let $0<q<1$ and $\sigma \in \M^{+}(\Omega)$ with $\sigma \not\equiv 0$. Suppose $G$ is a positive lower semicontinuous kernel on 
$\Omega \times \Omega$, which satisfies the WMP and \eqref{cond:Riesz0} for some 
$0<\alpha<\frac{n}{2}$. If $\frac{n}{n-2\alpha}<r<\infty$, then  
\eqref{cond:dsigma-green} with $\gamma=\frac{r(n-2 \alpha)-n}{n}$ 
implies \eqref{cond:dx-green}.
\end{Lem}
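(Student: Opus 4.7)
The plan is to mirror the proof of Lemma \ref{lemma-link-p}, specialized to the general positive kernel setting with $\g$ playing the role of $\W_{1,p}$ and with iteration exponent $t=\tfrac{1}{1-q}$ in place of $\tfrac{p-1}{p-1-q}$. Three ingredients will be combined: (a) a truncation of $\sigma$ for which condition \eqref{cond:dx-green} is automatic, (b) the iterated inequality \eqref{iterated1} of Theorem \ref{Thm:iterated}(i), and (c) the weighted norm inequality \eqref{inq:weighted-2} of Lemma \ref{lemma1} applied to the truncated measure. The key algebraic fact is that with $s=\tfrac{\gamma+q}{q}$ (which coincides with $s=\tfrac{r(n-2\alpha)-n(1-q)}{nq}$ in Lemma \ref{lemma1}, by the choice $\gamma=\tfrac{r(n-2\alpha)-n}{n}$), one has $\tfrac{qs}{1-q}=\tfrac{\gamma+q}{1-q}$.

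First I would set $\sigma_k:=\chi_{\Omega_k}\sigma$ with $\Omega_k=\{x\in\Omega:\g\sigma(x)\leq k\}\cap B(0,k)$. The WMP yields $\g\sigma_k(x)\leq hk$ for every $x\in\Omega$, while domination \eqref{cond:Riesz0} combined with the bounded support of $\sigma_k$ gives the decay
\[
\g\sigma_k(x)\leq c\,\sigma(B(0,k))\,(|x|+1)^{2\alpha-n},\qquad |x|\geq 2k.
\]
Since $r>\tfrac{n}{n-2\alpha}$, these two estimates imply $\g\sigma_k\in L^{r/(1-q)}(\Omega)$, so Lemma \ref{lemma1} is applicable to $\sigma_k$. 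Next I would invoke \eqref{iterated1} with $t=\tfrac{1}{1-q}$, obtaining
\[
(\g\sigma_k)^{\tfrac{1}{1-q}}\leq c\,\g\bigl((\g\sigma_k)^{\tfrac{q}{1-q}}\,d\sigma_k\bigr),
\]
raise to the $r$-th power, integrate over $\Omega$, and apply \eqref{inq:weighted-2} with $f=(\g\sigma_k)^{q/(1-q)}$. Using the exponent identity above and $\sigma_k\leq\sigma$ to dominate the resulting weighted integral, this produces
\[
A_k\leq C\,A_k^{(1-q)/s'}\Bigl(\int_\Omega(\g\sigma)^{(\gamma+q)/(1-q)}\,d\sigma\Bigr)^{r/s},
\]
where $A_k:=\int_\Omega(\g\sigma_k)^{r/(1-q)}\,dx$ and the right-hand integral is finite by hypothesis \eqref{cond:dsigma-green}.

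A short computation gives $1-\tfrac{1-q}{s'}=\tfrac{q(\gamma+1)}{\gamma+q}>0$, so the displayed inequality can be rearranged into a bound on $A_k$ that is independent of $k$. Monotone convergence as $k\to\infty$ (since $\sigma_k\uparrow\sigma$, hence $\g\sigma_k\uparrow\g\sigma$) then yields \eqref{cond:dx-green}. The main obstacle requiring care is the apparent circularity in invoking Lemma \ref{lemma1}, whose hypothesis is essentially the conclusion we seek; the resolution is that at each truncation level the hypothesis is verified directly from the construction, and the factor $\|\g\sigma_k\|_{L^{r/(1-q)}}^{1/s'}$ produced by Lemma \ref{lemma1} is absorbed into the $A_k$ on the left precisely because $(1-q)/s'<1$, an inequality that reflects the specific choice of $\gamma$.
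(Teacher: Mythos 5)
Your argument follows the paper's proof of Lemma \ref{lemma2} essentially step for step: truncate $\sigma$ to a measure $\sigma_k$ with bounded Green potential and bounded support so that $A_k=\int_\Omega(\g\sigma_k)^{r/(1-q)}\,dx$ is finite a priori, apply the iterated inequality \eqref{iterated1} with $t=\tfrac{1}{1-q}$, invoke the weighted estimate \eqref{inq:weighted-2} of Lemma \ref{lemma1} for $\sigma_k$, absorb the factor $A_k^{(1-q)/s'}$ using the identity $qs=\gamma+q$ and the exponent inequality $1-\tfrac{1-q}{s'}=\tfrac{q(1+\gamma)}{\gamma+q}>0$, and finish by monotone convergence. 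The only cosmetic deviation is that you construct $\Omega_k$ explicitly as $\{\g\sigma\le k\}\cap B(0,k)$ (mirroring Lemma \ref{lemma-link-p}) rather than taking an abstract exhaustion by relatively compact sets, which is equivalent for the purpose at hand; your closing remark on resolving the apparent circularity is accurate and is exactly why the truncation is introduced.
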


\begin{proof}
Let $\lbrace \Omega_{k}\rbrace_{k \in \N}$ be an  increasing exhaustive sequence of relatively compact open sets in $\Omega $ such that $\g\sigma (x)\leq k$ for $x\in \Omega_k$. Set $\sigma_{k}:= \chi_{\Omega_{k}}\sigma$. By the WMP, $\g\sigma_k \leq h k$ for all 
$x \in \Omega$. Moreover, if $\Omega_{k}\subset B(0, R)$, and $|x|>2R$, then 
$\g\sigma_k (x)\leq C \, |x|^{2 \alpha-n}$ by \eqref{cond:Riesz0}. Consequently, $\g\sigma_k (x)
\leq C \, (1+|x|)^{2 \alpha-n}$ for all $x\in\Omega$. Since $r>\frac{n}{n-2 \alpha}$ and 
$0<q<1$, we have 
\[
\int_{\Omega} (\g\sigma_{k})^{\frac{r}{1-q}} dx < + \infty.
\]
Applying the iterated inequality \eqref{iterated1} with $t=\frac{1}{1-q}$, followed by 
the weighted norm inequality \eqref{inq:weighted-2} with $f=(\g\sigma_{k})^{\frac{q}{1-q}}$, we estimate
\begin{align*} 
& \int_\Omega (\g\sigma_{k})^{\frac{r}{1-q}}\,dx 
\leq c \int_\Omega \left[ \mathbf{G} (f d \sigma_{k}) \right]^r dx\\ 
&\leq  c \big\| \g\sigma_{k} \big\|^{\frac{r}{s'}}_{L^{\frac{r}{1-q}} (\Omega)} 
             \big\| f \big\|^r_{L^{s} (\Omega,\,d\sigma_{k})}\\
&\leq c \Big[ \int_{\Omega} (\g\sigma_{k})^{\frac{r}{1-q}} dx \Big]^{\frac{1-q}{s'}}
\Big[ \int_\Omega (\mathbf{G} \sigma)^{\frac{qs}{1-q}} d \sigma\Big]^{\frac{r}{s}},
\end{align*}
where $s= \frac{r(n-2\alpha) - n(1-q)}{nq}$. In other words, 
\[
\Big[ \int_{\Omega} (\g\sigma_{k})^{\frac{r}{1-q}} dx\Big]^{1-\frac{1-q}{s'}}
\leq c \Big[ \int_\Omega (\mathbf{G} \sigma)^{\frac{qs}{1-q}} d \sigma\Big]^{\frac{r}{s}}
< +\infty,
\]
where the right-hand side integral is finite by assumption \eqref{cond:dsigma-green} with $\gamma=\frac{r(n-2 \alpha)-n}{n}$,  since
$qs=\gamma+q$. Here obviously $1-\frac{1-q}{s'}>0$.
Passing to the limit as  $k \rightarrow \infty$
in the preceding estimate yields \eqref{cond:dx-green}.
\end{proof}
We are now ready to prove Theorem \ref{thm:main3}.
\begin{proof}[Proof of Theorem \ref{thm:main3}]

Suppose \eqref{cond:dsigma-green} holds.
In view of Theorem \ref{thm_Ver17}, there exists a positive solution 
$u \in L^{\gamma +q}(\Omega, d\sigma)$ to the integral  equation
\begin{equation}\label{eq:int-green}
u = \g(u^{q}d\sigma) \quad \text{in} \;\; \R^n.
\end{equation}
Further, in light of Lemma \ref{lemma1} and Lemma \ref{lemma2}, 
the weighted norm inequality \eqref{inq:weighted} holds with $s=\frac{\gamma + q}{q}$. 
Hence, applying \eqref{inq:weighted} with $f =u^{q} \in L^{\frac{\gamma+q}{q}}(\Omega, d\sigma)$ yields
\[
\| u \|_{L^{r}(\Omega)} 
\leq \| \g(u^{q}d\sigma) \|_{L^{r}(\Omega)}
\leq c \| u^{q} \|_{L^{\frac{\gamma + q}{q}}(\Omega)}
= c \| u \|_{L^{\gamma +q}(\Omega)}^{q}
< + \infty.
\]
This shows that  $u\in L^{r}(\Omega)$ is a positive superharmonic solution to \eqref{eq:laplacian}.\end{proof}

To deduce Corollary \ref{cor:main3}, we need the following proposition 
(\cite[Proposition 4.8]{SV2}).  

\begin{Prop}\label{prop3} Let $\Omega \subseteq \R^n$. 
Let $G$ be a positive kernel on $\Omega\times \Omega$ which satisfies \eqref{cond:Riesz0} with $0<2 \alpha<n$. 
 If $\sigma \in L^{s}(\Omega)$ is a positive function, where 
$
s=\frac{n(\beta+1)}{n+2\alpha\beta},
$ and $\beta>0$, 
then
\begin{equation}\label{cond_general}
\g\sigma \in L^{\beta}(\Omega, d\sigma).
\end{equation}
\end{Prop}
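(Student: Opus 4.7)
The plan is to follow the template of Proposition \ref{prop:wolff}, specialized to the linear ($p=2$) Riesz setting. Since we have the direct pointwise domination $G(x,y)\le c\, I_{2\alpha}(x-y)$ from \eqref{cond:Riesz0}, no Havin--Maz'ya equivalence \eqref{mw} is needed; H\"older's inequality and the classical Sobolev inequality for the Riesz potential $\I_{2\alpha}$ are the only ingredients.

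First I would apply H\"older's inequality with exponents $s'$ and $s$ to write
\[
\int_\Omega (\g\sigma)^\beta\, d\sigma \le \|\g\sigma\|_{L^{\beta s'}(\Omega)}^{\beta}\,\|\sigma\|_{L^s(\Omega)}.
\]
This requires $s>1$, which a short computation shows is equivalent to $2\alpha<n$ (the standing hypothesis). Next, extending $\sigma$ by zero to $\R^n$ and using \eqref{cond:Riesz0} gives $\g\sigma(x)\le c\,\I_{2\alpha}\tilde\sigma(x)$ on $\Omega$, so the Sobolev inequality for $\I_{2\alpha}$ yields
\[
\|\g\sigma\|_{L^{\beta s'}(\Omega)} \le c\,\|\I_{2\alpha}\tilde\sigma\|_{L^{\beta s'}(\R^n)}\le c\,\|\sigma\|_{L^s(\Omega)},
\]
provided the exponent-matching identity $1/(\beta s')=1/s - 2\alpha/n$ holds. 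Combining these two estimates then yields \eqref{cond_general}.

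The main (and only) obstacle is exponent bookkeeping: one has to verify that the choice $s=n(\beta+1)/(n+2\alpha\beta)$ simultaneously forces $s>1$ and satisfies the Sobolev scaling $1/(\beta s')=(n-2\alpha)/(n(\beta+1))=1/s-2\alpha/n$. Both checks reduce to elementary algebra using $0<2\alpha<n$, and after that the argument is a three-line calculation with no further complication. The exponent $s$ in the statement has been engineered precisely so that these two constraints hold simultaneously.
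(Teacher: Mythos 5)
Your proof is correct, and it is exactly the paper's own approach: the paper cites Proposition~\ref{prop3} from \cite{SV2} but proves the analogous Wolff-potential statement, Proposition~\ref{prop:wolff}, by precisely this H\"older--Sobolev argument, of which your argument is the $p=2$ specialization with the Havin--Maz'ya equivalence \eqref{mw} replaced by the direct domination $G \le c\,I_{2\alpha}$ from \eqref{cond:Riesz0}. The exponent bookkeeping you flag does check out: $s>1$ iff $n>2\alpha$, and $\tfrac{1}{\beta s'}=\tfrac{n-2\alpha}{n(\beta+1)}=\tfrac{1}{s}-\tfrac{2\alpha}{n}>0$, which also gives $s<\tfrac{n}{2\alpha}$ as required for the Sobolev inequality.
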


Proposition \ref{prop3} with $\alpha=1$ and 
$\beta=\frac{r(n-2)-n(1-q)}{n(1-q)}=\frac{\gamma+q}{1-q}$ 
shows that  \eqref{cond:sigma-suff3} implies \eqref{cond:dsigma-green}, and 
consequently  Theorem \ref{thm:main3} yields Corollary \ref{cor:main3}.


\begin{thebibliography}{ABC00}

\bibitem[AH]{AH}\textsc{D. R. Adams and L. I. Hedberg,}
\emph{Function Spaces and Potential Theory}, 
Grundlehren der math. Wissenschaften \textbf{314}, 
Springer, Berlin-Heidelberg-New York, 1996.

\bibitem[An]{An}\textsc{A. Ancona}, 
\emph{Some results and examples about the behavior of harmonic functions
and Green's functions with respect to second order elliptic operators},  
Nagoya Math. J. \textbf{165} (2002), 
123--158.

\bibitem[BO]{BO}\textsc{L. Boccardo and L. Orsina}, 
\emph{Sublinear equations in {$L^s$}},  
Houston J. Math. \textbf{20} (1994), 
99--114. 

\bibitem[Br]{Br}\textsc{M. Brelot}, 
\emph{Lectures on Potential Theory}, 
Lectures on Mathematics \textbf{19},
Tata Inst., Bombay, 1960.

\bibitem[BK]{BK} 
\textsc{H. Brezis and S. Kamin}, \emph{Sublinear elliptic
equations on $\R^n$}, Manuscr. Math. \textbf{74} (1992), 87--106.

\bibitem[CV1]{CV1}\textsc{D. T. Cao and I. E. Verbitsky}, 
\emph{Finite energy solutions of quasilinear elliptic equations with sub-natural growth terms},  
Calc. Var. PDE \textbf{52} (2015), 
529--546.

\bibitem[CV2]{CV2}\textsc{D. T. Cao and I. E. Verbitsky}, 
\emph{Nonlinear elliptic equations and intrinsic potentials of Wolff type},  
J. Funct. Anal. \textbf{272} (2017), 
112--165.

\bibitem[CV3]{CV3}\textsc{D. T. Cao and I. E. Verbitsky}, 
\emph{Pointwise estimates of Brezis--Kamin type for solutions of sublinear elliptic equations},  
Nonlin. Analysis, Ser. A: Theory, Methods \& Appl. \textbf{146} (2016), 1--19.

\bibitem[COV]{COV}\textsc{C. Cascante, J. M. Ortega, and I. E. Verbitsky}, 
\emph{On $L^p$--$L^q$ trace inequalities},  
J. London Math. Soc. \textbf{74} (2006), 
497--511.

\bibitem[GH]{GH}\textsc{A. Grigor'yan and W. Hansen}, 
\emph{Lower estimates for a perturbed Green function},  
J. Anal. Math. \textbf{104} (2008), 
25--58. 
 
\bibitem[GV]{GV} \textsc{A. Grigor'yan and I. E. Verbitsky}, 
\emph{Pointwise estimates of solutions to nonlinear equations for nonlocal operators}, Ann. Scuola Norm. Super. Pisa (to appear), 
\href{https://arxiv.org/abs/1707.09596}{\url{arXiv:1707.09596}}.

\bibitem[HM]{HM}\textsc{V. P. Havin and V. G. Maz'ya}, 
\emph{Nonlinear potential theory},  
Russ. Math. Surveys \textbf{27} (1972), 
71--148.
\bibitem[HW]{HW}\textsc{L. I. Hedberg and T. Wolff}, 
\emph{Thin sets in nonlinear potential theory},  
Ann. Inst. Fourier (Grenoble) \textbf{33} (1983), 
161--187.

\bibitem[HKM]{HKM}\textsc{J. Heinonen, T. Kilpel\"{a}inen, and O. Martio}, 
\emph{Nonlinear Potential Theory of Degenerate Elliptic Equations}, 
Dover Publications, 2006 (unabridged republ. of 1993 ed., Oxford University Press).

\bibitem[HJ]{HJ}\textsc{P. Honz\'{i}k and  B. Jaye}, 
\emph{On the good-$\lambda$ inequality for nonlinear potentials}, 
Proc. Amer. Math. Soc. \textbf{140 }(2012), 4167--4180. 

\bibitem[JPW]{JPW}\textsc{B. Jawerth, C. Perez, and G. Welland}, 
\emph{The positive cone in Triebel-Lizorkin spaces and the relation among potential and maximal operators},  
Harmonic Analysis and Partial Differential Equations (Boca Raton, FL, 1988),
Contemp. Math. \textbf{107}, Amer. Math. Soc., Providence, RI, 1990, 71--91.

\bibitem[KKT]{KKT}\textsc{T. Kilpel{\"a}inen, T. Kuusi and A. Tuhola-Kujanp{\"a}{\"a}}, 
\emph{Superharmonic functions are locally renormalized solutions},  
Ann. Inst. H. Poincar{\'e},  Anal. Non Lin{\'e}aire \textbf{28} (2011), 
775--795.

\bibitem[KM]{KM}\textsc{T. Kuusi and G. Mingione}, 
\emph{Guide to nonlinear potential estimates},  
Bull. Math. Sci. \textbf{4} (2014), 
1--82.
 
\bibitem[MZ]{MZ}
\textsc{J. Mal{\'y} and W. Ziemer},  
\emph{Fine Regularity of Solutions of Elliptic Partial Differential Equations}, 
Math. Surveys Monogr. \textbf{51}, 
Amer. Math. Soc., Providence, RI, 1997.
 
\bibitem[Maz]{Maz}
\textsc{V. G. Maz'ya},  
\emph{Sobolev Spaces with Applications to Elliptic Partial Differential Equations}, 
Second, revised and augmented edition. Grundlehren der math. Wissenschaften \textbf{342}, 
Springer, Heidelberg, 2011.

\bibitem[QV]{QV} \textsc{S. Quinn and I. E. Verbitsky}, 
\emph{A sublinear version of Schur's lemma and elliptic PDE}, 
Analysis \& PDE {\bf 11} (2018), 
439--466.    

\bibitem[SV1]{SV1}\textsc{A. Seesanea and I. E. Verbitsky}, 
\emph{Finite energy solutions to inhomogeneous nonlinear elliptic equations with sub-natural growth terms}, 
Adv. Calc. Var. (2017),  
DOI: \href{https://doi.org/10.1515/acv-2017-0035}{10.1515/acv-2017-0035}, 
\href{https://arxiv.org/abs/1709.02048}{\url{arXiv:1709.02048}}. 

\bibitem[SV2]{SV2}\textsc{A. Seesanea and I. E. Verbitsky}, 
\emph{Solutions to sublinear elliptic equations with finite generalized energy},
Calc. Var. PDE (to appear), DOI: 10.1007/s00526-018-1448-1, 
\href{https://arxiv.org/abs/1804.09255}{\url{arXiv:1804.09255}}. 

\bibitem[TW]{TW} \textsc{N. S. Trudinger and X. J. Wang}, 
\emph{On the weak continuity of elliptic operators and applications to potential theory}, 
Amer. J. Math. {\bf 124} (2002), 
369--410.    

\bibitem[V]{V}\textsc{I. E. Verbitsky}, 
\emph{Sublinear equations and Schur's test for integral operators}, 
50 Years with Hardy Spaces, a Tribute to Victor Havin, Oper. Theory: Adv. Appl. \textbf{261} (2017), 
467--484.
\end{thebibliography}
\end{document}